\newtheorem{theo}{Theorem}[section]
\newtheorem{cor}[theo]{Corollary}
\newtheorem{lem}[theo]{Lemma}
\theoremstyle{definition}
\newtheorem{defin}[theo]{Definition}
\newtheorem*{lem*}{Lemma}
\newtheorem{rem}[theo]{Remark}
\newtheorem*{cor*}{Corollary}
\newtheorem*{theo*}{Theorem}
\newtheorem{example}[theo]{Example}
\renewcommand{\div}{\operatorname{div}}
\newcommand{\N}{\ensuremath{\mathbb{N}}}
\newcommand{\R}{\ensuremath{\mathbb{R}}}
\def\Xint#1{\mathchoice
    {\XXint\displaystyle\textstyle{#1}}%
    {\XXint\textstyle\scriptstyle{#1}}%
    {\XXint\scriptstyle\scriptscriptstyle{#1}}%
    {\XXint\scriptscriptstyle\scriptscriptstyle{#1}}%
    \!\int}
\def\XXint#1#2#3{\setbox0=\hbox{$#1{#2#3}{\int}$}
    \vcenter{\hbox{$#2#3$}}\kern-0.5\wd0}
\def\bint{\Xint-}
\def\dashint{\Xint{\raise4pt\hbox to7pt{\hrulefill}}}
\def\XXiint#1#2#3{\setbox0=\hbox{$#1{#2#3}{\iint}$}
    \vcenter{\hbox{$#2#3$}}\kern-0.5\wd0}
\renewcommand{\epsilon}{\varepsilon}
\newcommand{\eps}{\varepsilon}
\renewcommand{\rho}{\varrho}
\newcommand{\ph}{\varphi}
\renewcommand{\epsilon}{\varepsilon}
\renewcommand{\rho}{\varrho}
\renewcommand{\d}{\:\! \mathrm{d}}
\DeclareMathOperator*{\esssup}{ess\,sup}
\DeclareMathOperator{\loc}{loc}
\numberwithin{equation}{section}
\begin{document}
\renewcommand{\refname}{References} 
\renewcommand{\abstractname}{Abstract} 
\title[Supercaloric functions]{Supercaloric functions for the parabolic $p$-Laplace equation in the fast diffusion case}
\author[R.Kr. Giri]{Ratan Kr. Giri}
\address{Ratan Kr. Giri\\
	Department of Mathematics, Aalto University\\
	P.~O.~Box 11100, FI-00076 Aalto University, Finland}
\email{giri90ratan@gmail.com}
\author[J. Kinnunen]{Juha Kinnunen}
\address{Juha Kinnunen\\
	Department of Mathematics, Aalto University\\
	P.~O.~Box 11100, FI-00076 Aalto University, Finland}
\email{juha.k.kinnunen@aalto.fi}
\author[K. Moring]{Kristian Moring}
\address{Kristian Moring\\
	Department of Mathematics, Aalto University\\
	P.~O.~Box 11100, FI-00076 Aalto University, Finland}
\email{kristian.moring@aalto.fi}
\subjclass[2010]{35K55, 35K67}
\keywords{Parabolic $p$-Laplace equation, $p$-supercaloric function, obstacle problem, comparison principle, Moser iteration}

\begin{abstract}
We study a generalized class of supersolutions, so-called $p$-supercaloric functions, to the parabolic $p$-Laplace equation.
This class of functions is defined as lower semicontinuous functions that are finite in a dense set and satisfy the parabolic comparison principle. 
Their properties are relatively well understood for $p\ge 2$, but little is known in the fast diffusion case $1<p<2$. 
Every bounded $p$-supercaloric function belongs to the natural Sobolev space and is a weak supersolution to the parabolic $p$-Laplace equation for the entire range $1<p<\infty$. 
Our main result shows that unbounded $p$-supercaloric functions are divided into two mutually exclusive classes with sharp local integrability estimates for the function and its weak gradient in the supercritical case $\frac{2n}{n+1}<p<2$. 
The Barenblatt solution and the infinite point source solution show that both alternatives occur.
Barenblatt solutions do not exist in the subcritical case $1<p\le \frac{2n}{n+1}$ and the theory is not yet well understood.
\end{abstract}
\makeatother

\maketitle

\section{Introduction}

This paper  studies classes of supersolutions to the parabolic $p$-Laplace equation 
\begin{equation}\label{evo_eqn}
\partial_tu -\div\left( |\nabla u |^{p-2} \nabla u \right)=0.
\end{equation}  
The general theory covers the entire parameter range $1<p<\infty$, but different phenomena occur in the slow diffusion case $p>2$ and
in the fast diffusion case $1<p<2$. For $p=2$ we have the heat equation.
We do not only consider weak solutions, but also weak supersolutions and, more generally, $p$-supercaloric functions to \eqref{evo_eqn}. 
They are pointwise defined lower semicontinuous functions, finite in a dense subset, and are required to satisfy the comparison principle with respect to the solutions of \eqref{evo_eqn},
see Definition \ref{supercaloric} below. 
The definition of supercaloric functions is the same as in classical potential theory for the heat equation when $p=2$, see Watson \cite{W}. 
By Juutinen et al. \cite{juutinen}, the class of $p$-supercaloric functions is the same as the viscosity supersolutions to \eqref{evo_eqn} for $1<p<\infty$.
Our results can be extended to more general quasilinear equations
$$\partial_tu-\div A(x,t,u,\nabla u)=0,$$
with the $p$-growth,
which are discussed in DiBenedetto \cite{Di}, DiBenedetto et al. \cite{digi} and Wu et al. \cite{zhaobook}. 
For simplicity, we discuss only the prototype case in \eqref{evo_eqn}.

A $p$-supercaloric function does not, in general, belong to the natural Sobolev space for \eqref{evo_eqn}. 
The only connection to the equation is through the comparison principle. 
However, Kinnunen and Lindqvist\cite{KinnunenLindqvist2006} proved that bounded $p$-supercaloric functions belong to the appropriate Sobolev space  and are weak supersolutions to \eqref{evo_eqn} for $p \geq 2$. 
Korte et al.\cite{KoKuPa} extended the study for a more general class of parabolic equations with $p$-growth. 
In this paper we show that bounded $p$-supercaloric functions are weak solutions to \eqref{evo_eqn} for the entire range $1<p<\infty$.

We are mainly interested in unbounded $p$-supercaloric functions.
Assume that $u$ is a $p$-supercaloric function in $\Omega_T=\Omega\times(0,T)$, where $\Omega$ is an open set in $\R^n$ and $T>0$.
One of the main results of Kuusi et al. \cite{KuLiPa} asserts that for $p>2$ there are two mutually exclusive alternatives: 
Either $u\in L_{\loc}^q(\Omega_T)$ for every $0<q<p-1+\frac pn$ or $u\notin L_{\loc}^{p-2}(\Omega_T)$. 
In particular, if $u\in L_{\loc}^{p-2}(\Omega_T)$, then $u\in L_{\loc}^q(\Omega_T)$ for every $0<q<p-1+\frac pn$. 
For the corresponding theory for the porous medium equation, see Kinnunen et al.~\cite{kilelipa}.

Examples based on the Barenblatt solution (see Barenblatt~\cite{B}) and the friendly giant show that both alternatives occur.
In the first alternative the upper bound for the exponent is given by the Barenblatt solution
\begin{equation}\label{e.deg_barenblatt}
U(x,t)=
(\lambda t)^{-\frac{n}{\lambda}}\left(c-\tfrac{p-2}{p}(\lambda t)^{-\tfrac{p}{\lambda(p-1)}}|x|^{\frac{p}{p-1}}\right)_+^{\frac{p-1}{p-2}},
\quad (x,t)\in\R^n\times(0,\infty),
\end{equation}
where $2<p<\infty$, $\lambda = n(p-2)+p$ and the constant $c$ is a positive number, which can be chosen such that 
\begin{equation}\label{e.normalization}
\int_{\R^n}U(x,t)\,\d x=1
\end{equation} 
for every $t>0$. 
The Barenblatt solution is a weak solution to \eqref{evo_eqn} in $\R^n\times(0,\infty)$ and the zero extension
\begin{equation}\label{e.zero_barenblatt}
u(x,t)=
\begin{cases}
U(x,t),&\quad t >0,\\
0, &\quad t \leq 0,
\end{cases}
\end{equation}
is $p$-supercaloric in $\R^n \times \R$ with $u\in L_{\loc}^q(\R^n \times \R)$ for every $0<q<p-1+\frac pn$.
This function solves \eqref{evo_eqn} with a finite point source, but it fails to belong to the natural Sobolev space, since $|\nabla u|\notin L_{\loc}^p(\R^n \times \R)$.
For the second alternative, we consider a bounded open set $\Omega$ in $\R^n$ with a smooth boundary.
By separation of variables, we obtain the friendly giant 
\[
U(x,t) =t^{-\frac{1}{p-2}}u(x),
\]
where $u\in C(\Omega)\cap W^{1,p}_{0}(\Omega)$ is a weak solution to the elliptic equation 
$$
\div\bigl(|\nabla u|^{p-2}\nabla u\bigr)+\tfrac{1}{p-2}u=0
$$
in $\Omega$ with $u(x)>0$ for every $x\in\Omega$.
The function $U$ is a weak solution to \eqref{evo_eqn} in $\Omega\times(0,\infty)$ and the zero extension as in \eqref{e.zero_barenblatt} 
is $p$-supercaloric in $\Omega \times \R$ with $u\notin L_{\loc}^{p-2}(\Omega \times \R)$.

We prove the corresponding result in the supercritical range $\frac{2n}{n+1}<p<2$ for a $p$-supercaloric function $u$ in $\Omega_T$.
It asserts that either $u\in L_{\loc}^q(\Omega_T)$ for every $0<q<p-1+\frac pn$ or $u\notin L^{\frac{n}{p}(2-p)}_{\loc}(\Omega_T)$. 
In particular, if $u\in L_{\loc}^{\frac{n}{p}(2-p)}(\Omega_T)$, then $u\in L_{\loc}^q(\Omega_T)$ for every $0<q<p-1+\frac pn$.
Again, both alternatives occur.
For $\frac{2n}{n+1}<p<2$, the Barenblatt solution (see Wu et al. \cite[2.7.2]{zhaobook} and Bidaut-V\'{e}ron \cite{veron}) of $\eqref{evo_eqn}$ is given by formula
\begin{equation}\label{e.sing_barenblatt}
U(x,t)=
(\lambda t)^{-\frac{n}{\lambda}}\left(c+ \tfrac{2-p}{p}(\lambda t)^{-\frac{p}{\lambda(p-1)}}|x|^{\frac{p}{p-1}}\right)^{-\frac{p-1}{2-p}},
\quad (x,t)\in\R^n\times(0,\infty),
\end{equation}
where $\lambda = n(p-2)+p$ and the constant $c$ is a positive number so that \eqref{e.normalization} holds for every $t>0$. 
Observe that $p >\frac{2n}{n+1}$ is equivalent with $\lambda > 0$. 
Barenblatt solutions do not exist in the subcritical case $1<p\le \frac{2n}{n+1}$ and, to our knowledge, the theory is not yet well understood.
The Barenblatt solution is compactly supported for every $t>0$ for $2<p<\infty$, but it is positive everywhere for $\frac{2n}{n+1}<p<2$.
As above, the zero extension to the negative times is $p$-supercaloric in $\R^n \times \R$ with $u\in L_{\loc}^q(\R^n \times \R)$ for every $0<q<p-1+\frac pn$.
A prime example of a $p$-supercaloric function for $\frac{2n}{n+1}<p<2$ that does not belong to the Barenblatt class is
the infinite point source solution
\begin{equation} \label{razorblade}
U(x,t)=\left(\frac{ct}{|x|^p}\right)^{\frac{1}{2-p}}, \quad (x,t)\in\R^n\times(0,\infty),
\quad c=(2-p)\left(\tfrac{p}{2-p}\right)^{p-1}\left(\tfrac{p}{2-p}-n\right),
\end{equation}
see Chasseigne and V\'azquez \cite{Vazquez}. 
This function is a solution to $\eqref{evo_eqn}$ in $(\R^n \setminus \{ 0 \})\times (0, \infty)$ and has singularity at $x=0$ for every $t > 0$.
Observe that \eqref{razorblade} is obtained by setting $c=0$ in the Barenblatt solution \eqref{e.sing_barenblatt} and thus solves \eqref{evo_eqn} with an infinite point source.
The zero extension as in \eqref{e.zero_barenblatt} is a $p$-supercaloric function $u$ in $\R^n \times \R$ with $u\notin L^{\frac{n}{p}(2-p)}_{\loc}(\R^n \times \R)$. 
Our main result asserts, roughly speaking, that a $p$-supercaloric function and its gradient have similar local integrability properties than the Barenblatt solution or the corresponding properties are at least as bad as for the infinite point source solution. 
A Moser type iteration scheme and Harnack estimates are applied in the argument.

{\bf Acknowledgments.} The authors would like to thank Peter Lindqvist for useful discussions and the Academy of Finland for support. K.~Moring has also been supported by the Magnus Ehrnrooth Foundation.

\section{Weak supersolutions and supercaloric functions}
We begin with notation. Let $\Omega \subset \R^n$ be an open set. 
For $T>0$ we denote a space-time cylinders in $\R^{n+1}$ by $\Omega_T=\Omega \times (0,T)$ and  $\Omega_{t_1, t_2}=\Omega\times (t_1,t_2)$, with $t_1<t_2$. 
The parabolic boundary of $\Omega_T$ is $\partial_p\Omega_T=(\overline\Omega\times\{0\})\cup(\partial\Omega\times(0,T])$.
We denote a cube in $\R^n$ by $Q=(a_1, b_1)\times \cdots \times (a_n, b_n)$ and the space-time cylinders of the form $Q_{t_1,t_2}$ with $t_1<t_2$, are called boxes.
$\Omega'_{t_1,t_2}\Subset\Omega_T$ denotes that $\overline{\Omega'_{t_1,t_2}}$ is a compact subset of $\Omega_T$.
 
Let $W^{1,p}(\Omega)$ denote the Sobolev space of functions $u\in L^p(\Omega)$, whose  first distributional partial derivatives $\frac{\partial u}{\partial x_i}$, $i=1,2,\dots, n$, exist in $\Omega$ and belong to $L^p(\Omega)$. The corresponding Sobolev space with zero boundary values is denoted by $W^{1,p}_0(\Omega)$. 
The parabolic Sobolev space $L^p(0,T;W^{1,p}(\Omega))$ consists of functions $u=u(x,t)$ such that for almost every $t \in (0,T)$, the function $x\mapsto u(x,t)$ belongs to $W^{1,p}(\Omega)$ and  
$$\iint_{\Omega_T} \big( |u(x,t)|^p+|\nabla u(x,t)|^p \big)\, \d x\, \d t<\infty.$$
Observe that the time derivative $\partial_t u$ does not appear anywhere. The definition of the space $L^p(0, T;W_0^{1,p}(\Omega))$ is similar. 
We denote $u\in L_{\loc}^p(0,T;W^{1,p}_{\loc}(\Omega))$ if $u\in L^p(t_1,t_2;W^{1,p}(\Omega'))$ for every $\Omega'_{t_1,t_2}\Subset\Omega_T$.
The definition for $L_{\loc}^p(\R;W^{1,p}_{\loc}(\R^n))$ is analogous.
Gradient  and divergence  are always taken with respect to the spatial variable only. 

Weak solutions to \eqref{evo_eqn} are assumed to belong to a parabolic Sobolev space, which guarantees a priori local integrability for the function and its weak gradient. 
We state the definition and results in a space-time cylinder $\Omega_T$, with $T>0$, but extensions to arbitrary cylinders $\Omega_{t_1,t_2}$, with $t_1<t_2$, are obvious. 

\begin{defin}
Let $1<p<\infty$ and let $\Omega$ be an open set in $\R^n$. A function $u\in L_{\loc}^p(0,T;W^{1,p}_{\loc}(\Omega))$ is called a weak solution to $\eqref{evo_eqn}$, if
\[
\iint_{\Omega_T} \left(-u \partial_t \varphi + |\nabla u|^{p-2}\nabla u \cdot \nabla \varphi \right)\,\d x\,\d t  =0
\]
for every $\varphi \in C^\infty_0(\Omega_T)$. 
Furthermore, we say that $u$ is a weak supersolution if the integral above is nonnegative for all nonnegative test functions $\varphi \in C^{\infty}_0(\Omega_T)$. If the integral is non-positive for such test functions, we call $u$ a weak subsolution.
\end{defin}

Locally bounded weak solutions are locally H\"older continuous, see \cite[Chapters III and IV]{Di}, and locally bounded gradients are locally 
H\"older continuous, see DiBenedetto \cite[Chapter IX]{Di}.
For lower semicontinuity of weak supersolutions, see Kuusi \cite{Kuusi2009}.
The statement of the following existence result can be found in Bj\"orn et al.~\cite[Theorem 2.3]{Anders}.
For the proof, we refer to Ivert \cite[Theorem 3.2]{ivert}, which applies Fontes \cite{Fontes2009}.
For related existence results, see B\"{o}gelein et al. \cite[Theorem 1.2]{BogeleinEtAl2014}, \cite[Theorem 1.2]{bogelein}.
Boundary behaviour in more general situations has been studied in Bj\"orn et al.~\cite{Anders} and Gianazza et al. \cite{GLL}.

\begin{theo}\label{t.existence}
Let $1<p<\infty$ and let $\Omega$ be a bounded open set in $\R^n$ with a Lipschitz boundary and $g\in C(\partial_p\Omega_T)$.
Then there exists a unique weak solution $u \in C(\overline{\Omega_T})$ to \eqref{evo_eqn}  
with $u=g$ on $\partial_p\Omega_T$. 
Moreover, if $g$ belongs to $C(0,T;L^2(\Omega))\cap L^p(0,T;W^{1,p}(\Omega))$, so does $u$. 
\end{theo}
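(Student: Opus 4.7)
The plan is to first establish existence and uniqueness under the stronger assumption $g \in C(0,T;L^2(\Omega)) \cap L^p(0,T;W^{1,p}(\Omega))$, and then extend to merely continuous $g$ by uniform approximation combined with the comparison principle.

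For the regular case, I would extend $g$ continuously into $\overline{\Omega_T}$ (using the Lipschitz boundary to guarantee a Sobolev extension) and seek $u = g + v$, where $v$ has zero parabolic boundary values. Then $v$ satisfies a Cauchy--Dirichlet problem for the monotone quasilinear operator $w \mapsto -\div\bigl(|\nabla(w+g)|^{p-2}\nabla(w+g)\bigr)$ with forcing $-\partial_t g$. Existence of $v \in C(0,T;L^2(\Omega)) \cap L^p(0,T;W_0^{1,p}(\Omega))$ follows from a Galerkin scheme in space combined with Minty's monotonicity trick, or alternatively from Rothe time-discretization; the hypothesis on $g$ ensures that $\partial_t g$ lies in the appropriate dual space. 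Uniqueness and the comparison principle then come from subtracting two solutions, justifying $u_1 - u_2$ (respectively $(u_1-u_2)_+$) as an admissible test function via Steklov averaging in time, and invoking the monotonicity inequality
$$\bigl(|\xi|^{p-2}\xi - |\eta|^{p-2}\eta\bigr)\cdot(\xi-\eta) \geq 0$$
for $\xi,\eta \in \R^n$.

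For general $g \in C(\partial_p\Omega_T)$, I would approximate $g$ uniformly by smoother data $g_k$ from the regular class, for instance by first extending $g$ continuously into $\overline{\Omega_T}$ and then mollifying. Let $u_k$ denote the corresponding solutions. Applying the comparison principle with the constant $\|g_k-g_j\|_\infty$ yields
$$\|u_k - u_j\|_{C(\overline{\Omega_T})} \leq \|g_k - g_j\|_{C(\partial_p\Omega_T)} \longrightarrow 0,$$
so $u_k \to u$ uniformly with $u \in C(\overline{\Omega_T})$ and $u = g$ on $\partial_p\Omega_T$. Interior Caccioppoli estimates, valid once $\|u_k\|_\infty$ is controlled by $\|g_k\|_\infty$ (again by comparison with constants), give local $L^p$-bounds on $\nabla u_k$ independent of the Sobolev norm of $g_k$. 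Combined with DiBenedetto's intrinsic H\"older continuity this yields enough compactness, together with Minty's trick for the nonlinearity, to pass to the limit in the weak formulation and conclude that $u$ is itself a weak solution.

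The delicate step is the boundary continuity. To run the approximation one needs $u_k \in C(\overline{\Omega_T})$ at each stage, and more fundamentally the pointwise attainment of the boundary values at every point of $\partial_p\Omega_T$. This requires barrier constructions: at the lateral boundary the Lipschitz cone condition on $\partial\Omega$ is used to build local sub- and supersolutions compatible with the degenerate ($p>2$) or singular ($p<2$) structure of the equation, working in the intrinsic space-time cylinders of DiBenedetto; continuity at the initial time $t=0$ is comparatively easier and follows from energy estimates against the initial datum. This barrier argument, where the Lipschitz hypothesis on $\partial\Omega$ enters essentially, is the main obstacle and the reason existence theorems of this type are usually cited rather than reproved.
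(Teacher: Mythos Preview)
The paper does not give its own proof of this theorem. Immediately before the statement it says: ``For the proof, we refer to Ivert \cite[Theorem 3.2]{ivert}, which applies Fontes \cite{Fontes2009},'' and it points to Bj\"orn et al.~\cite{Anders} and B\"ogelein et al.~\cite{BogeleinEtAl2014,bogelein} for related results and boundary regularity. So there is nothing to compare your argument against; the theorem is simply quoted from the literature.

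Your outline is a reasonable sketch of how such an existence theorem is actually proved, and you correctly identify in your last paragraph that the barrier construction for boundary continuity is the heart of the matter and the reason results of this type are usually cited rather than reproved. That is exactly what the paper does. One small remark: in the approximation step you appeal to the comparison principle (Theorem~\ref{t.parcoparison}) to get the uniform Cauchy estimate $\|u_k-u_j\|_\infty\le\|g_k-g_j\|_\infty$, but as stated in the paper that result assumes the sub- and supersolution are lower/upper semicontinuous up to the closure, which is precisely the boundary continuity you are trying to establish; so in a self-contained argument the comparison principle used at this stage should be the weak-solution version with boundary data taken in the Sobolev sense, before continuity up to the boundary has been obtained.
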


We consider a comparison principle for weak super- and subsolutions. 
The argument is similar to Kilpel\"ainen and Lindqvist \cite[Lemma 3.1]{Kilpelainen} and Korte et al.\cite[Lemma 3.5]{KoKuPa}.

\begin{theo}\label{t.parcoparison}
Let $1<p<\infty$ and let $\Omega$ be a bounded open set in $\R^n$. Assume that $v$ is a weak supersolution and $u$ is a weak subsolution to $\eqref{evo_eqn}$ in $\Omega_T$. 
If $v$ and $-u$ are lower semicontinuous in $\overline{\Omega}_T$ and $u\leq v$ on $\partial_p\Omega_T$, then $u\leq v$ almost everywhere in $\Omega_T$.
\end{theo}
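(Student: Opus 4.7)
The plan is to follow the classical Steklov-averaging comparison argument: reduce the claim to an energy inequality for the truncated positive part $(u-v-\epsilon)_{+}$ and exploit the monotonicity of the $p$-Laplace nonlinearity. Since $u$ is upper semicontinuous and $v$ is lower semicontinuous on the compact set $\overline{\Omega}_T$, the difference $u-v$ is upper semicontinuous there, and the hypothesis $u\le v$ on $\partial_p\Omega_T$ implies that for every $\epsilon>0$ the closed set $K_{\epsilon}=\{u-v\ge\epsilon\}$ is a compact subset of $\Omega\times(0,T]$ disjoint from $\partial_p\Omega_T$. Hence $K_\epsilon\subset\Omega'\times[\tau_0,T]$ for some $\Omega'\Subset\Omega$ and some $\tau_0>0$, and it suffices to prove $(u-v-\epsilon)_+=0$ almost everywhere in $\Omega_T$ for each fixed $\epsilon>0$ and then let $\epsilon\downarrow 0$.

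For the core argument, introduce the Steklov averages $u_h,v_h$ in time. Subtracting the pointwise-in-time forms of the weak sub- and supersolution inequalities yields
\begin{equation*}
\int_{\Omega}\partial_t(u_h-v_h)\,\varphi\,\d x+\int_{\Omega}\bigl(|\nabla u|^{p-2}\nabla u-|\nabla v|^{p-2}\nabla v\bigr)_{h}\cdot\nabla\varphi\,\d x\le 0
\end{equation*}
for a.e.\ $t\in(0,T-h)$ and every nonnegative $\varphi\in W_0^{1,p}(\Omega'')$, where $\Omega'\Subset\Omega''\Subset\Omega$. I would fix a cutoff $\zeta\in C_0^{\infty}(\Omega'')$ with $\zeta\equiv 1$ in a neighborhood of $\Omega'$, test with $\varphi(\cdot,t)=\zeta^{p}(u_h-v_h-\epsilon)_{+}(\cdot,t)$, and integrate over $t\in(0,\tau)$. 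By the Steklov chain rule, the parabolic contribution equals $\tfrac{1}{2}\int_\Omega\zeta^p(u_h-v_h-\epsilon)_+^2(\cdot,\tau)\,\d x$ minus a term at $t=0$ which vanishes for $h<\tau_0/2$, since $u-v<\epsilon$ on $\Omega''\times[0,\tau_0/2]$ by the definition of $K_\epsilon$. On the spatial side, the monotonicity inequality $(|\xi|^{p-2}\xi-|\eta|^{p-2}\eta)\cdot(\xi-\eta)\ge 0$, applied with $\xi=\nabla u$ and $\eta=\nabla v$, makes the principal contribution nonnegative in the limit $h\to 0$, leaving only a cross term involving $\nabla\zeta$.

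The main technical obstacle is controlling this cross term
\begin{equation*}
p\iint_{(0,\tau)\times\Omega}\zeta^{p-1}\bigl(|\nabla u|^{p-2}\nabla u-|\nabla v|^{p-2}\nabla v\bigr)_{h}\cdot\nabla\zeta\,(u_h-v_h-\epsilon)_{+}\,\d x\,\d t,
\end{equation*}
because Steklov averages need not inherit the upper semicontinuity of $u-v$, so $\{u_h>v_h+\epsilon\}$ need not lie inside $\{u>v+\epsilon\}$. However, $\spt\nabla\zeta\subset\Omega''\setminus\Omega'$ is disjoint from $K_\epsilon$, whence $(u-v-\epsilon)_+\equiv 0$ there, and the strong $L^{p}$-convergence of the Steklov averages $u_h-v_h\to u-v$ together with $L^{p/(p-1)}$-convergence of the averaged vector field and H\"older's inequality let one pass $h\to 0$ and conclude that this cross term vanishes. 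Passing to the limit in the remaining inequality yields
\begin{equation*}
\tfrac{1}{2}\int_{\Omega}\zeta^{p}(u-v-\epsilon)_{+}^{2}(\cdot,\tau)\,\d x\le 0
\end{equation*}
for a.e.\ $\tau\in(0,T)$. Since $\zeta\equiv 1$ near $\Omega'$, this forces $(u-v-\epsilon)_+\equiv 0$ a.e.\ on $\Omega'\times(0,T)$; outside $\Omega'\times[\tau_0,T]\supset K_\epsilon$ the positive part already vanishes, so $(u-v-\epsilon)_+=0$ a.e.\ in $\Omega_T$, and letting $\epsilon\downarrow 0$ completes the proof.
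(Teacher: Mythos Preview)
Your argument is correct and follows the classical scheme; the paper does not give its own proof but simply cites \cite[Lemma~3.1]{Kilpelainen} and \cite[Lemma~3.5]{KoKuPa}, where the same Steklov-average energy argument with the test function $(u-v-\epsilon)_+$ and the monotonicity of $\xi\mapsto|\xi|^{p-2}\xi$ is carried out. One small simplification over your write-up: since your own compactness step already gives $K_\epsilon\subset\Omega'\times[\tau_0,T]$ with $\Omega'\Subset\Omega$, averaging in time shows that $(u_h-v_h-\epsilon)_+(\cdot,t)$ itself vanishes outside $\overline{\Omega'}$ for every $h>0$, so it is directly admissible as a test function in $W_0^{1,p}(\Omega'')$ and the spatial cutoff $\zeta$---and with it the cross term you work to eliminate---can be dropped altogether.
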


A pointwise limit of a sequence of uniformly bounded weak supersolutions is a weak supersolution, 
see Korte et al.\cite[Theorem 5.3]{KoKuPa}, see also Kinnunen and Lindqvist \cite{KinnunenLindqvist2006}.

\begin{theo}\label{converegnce_result}
	Let $1<p<\infty$ and let $\Omega$ be an open set in $\R^n$. Assume that $u_i$, $i=1,2,\dots$,  are weak supersolutions to $\eqref{evo_eqn}$  in $\Omega_T$ such that $\|u_i\|_{L^\infty(\Omega_T)}\leq L<\infty$
	 for every $i=1,2\dots$ and $u_i\to u$ almost everywhere in $\Omega_T$ as $i\to\infty$. Then $u$ is a weak supersolution to $\eqref{evo_eqn}$ in $\Omega_T$.
\end{theo}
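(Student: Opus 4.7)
The plan is to extract uniform energy bounds for the sequence $(u_i)$, pass to a weak limit in the gradients, and identify the weak limit of the nonlinear flux $|\nabla u_i|^{p-2}\nabla u_i$ via a monotonicity argument.

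The first step is a standard Caccioppoli-type energy estimate. Fix $\Omega'_{t_1,t_2}\Subset\Omega_T$, a cutoff $\eta\in C_0^\infty(\Omega_T)$ with $\eta\equiv 1$ on $\Omega'_{t_1,t_2}$, and a constant $M>L$. Testing the supersolution inequality for $u_i$ against the nonnegative function $\varphi_i=(M-u_i)\eta^p$, after a Steklov-average regularization in time to give meaning to $\partial_t u_i$, and applying Young's inequality to absorb the gradient term on the left, produces
\[
\iint_{\Omega'_{t_1,t_2}}|\nabla u_i|^p\,\d x\,\d t \leq C(L,M,p,\eta),
\]
with a constant independent of $i$. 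Dominated convergence and the uniform $L^\infty$ bound give $u_i\to u$ in $L^p_{\loc}(\Omega_T)$, and by reflexivity of $L^p$ (using $1<p<\infty$) a subsequence satisfies $\nabla u_i\rightharpoonup\xi$ in $L^p_{\loc}(\Omega_T,\R^n)$. The distributional identity $\iint u_i\Div\psi=-\iint\nabla u_i\cdot\psi$ then forces $\xi=\nabla u$, so $u\in L^p_{\loc}(0,T;W^{1,p}_{\loc}(\Omega))$. A further subsequence yields a weak limit $|\nabla u_i|^{p-2}\nabla u_i\rightharpoonup A$ in $L^{p/(p-1)}_{\loc}(\Omega_T,\R^n)$.

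The main obstacle is to show $A=|\nabla u|^{p-2}\nabla u$; weak convergence of the gradients is not enough for the nonlinearity. I would resolve this via a Minty--Browder monotonicity argument based on
\[
\bigl(|\xi|^{p-2}\xi-|\zeta|^{p-2}\zeta\bigr)\cdot(\xi-\zeta)\geq 0,\qquad \xi,\zeta\in\R^n,
\]
or equivalently by proving strong convergence $\nabla u_i\to\nabla u$ in $L^p_{\loc}$. The latter proceeds by testing the weak formulations for $u_i$ and $u_j$ against a time-mollification of $(u_i-u_j)\eta^p$ and combining the monotonicity of the flux with the $L^p_{\loc}$ convergence of $u_i$ already in hand, which forces $(\nabla u_i)$ to be Cauchy in $L^p_{\loc}$. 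Once $A=|\nabla u|^{p-2}\nabla u$ is identified, passing to the limit in
\[
0\leq\iint_{\Omega_T}\bigl(-u_i\partial_t\varphi+|\nabla u_i|^{p-2}\nabla u_i\cdot\nabla\varphi\bigr)\,\d x\,\d t
\]
for arbitrary nonnegative $\varphi\in C_0^\infty(\Omega_T)$---the first term by dominated convergence, the second by weak convergence---establishes that $u$ is a weak supersolution in $\Omega_T$. The recurring technical subtlety is that weak supersolutions lack a priori time derivatives, so Steklov averaging is essential to justify both the test by $\varphi_i$ in the Caccioppoli step and the test by $(u_i-u_j)\eta^p$ in the identification step.
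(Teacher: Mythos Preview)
The paper does not prove this theorem itself; it quotes the result from \cite[Theorem~5.3]{KoKuPa} (see also \cite{KinnunenLindqvist2006}) and uses it as a black box. Your overall plan---Caccioppoli bounds from the nonnegative test function $(M-u_i)\eta^p$, weak compactness of $\nabla u_i$ and of $|\nabla u_i|^{p-2}\nabla u_i$, then identification of the limiting flux---is the standard strategy and matches what is done in those references.

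There is, however, a genuine gap at the identification step. You propose to force $(\nabla u_i)$ to be Cauchy in $L^p_{\loc}$ by testing the supersolution inequalities for $u_i$ and $u_j$ with (a time-mollification of) $(u_i-u_j)\eta^p$. This is not admissible: the weak supersolution inequality holds only against \emph{nonnegative} test functions, and $(u_i-u_j)\eta^p$ changes sign. Splitting into positive and negative parts does not rescue the argument either---testing $u_i$ with $(u_j-u_i)_+\eta^p$ and $u_j$ with $(u_i-u_j)_+\eta^p$ produces inequalities whose signs are wrong for bounding the monotonicity quantity $\iint\bigl(|\nabla u_i|^{p-2}\nabla u_i-|\nabla u_j|^{p-2}\nabla u_j\bigr)\cdot\nabla(u_i-u_j)\,\eta^p$ from above. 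The same one-sidedness obstructs a naive Minty--Browder argument: the key energy inequality $\limsup_i\iint|\nabla u_i|^p\eta^p\le\iint A\cdot\nabla u\,\eta^p$ does not drop out of a single application of the supersolution property. This difficulty is specific to supersolutions (as opposed to solutions), and it is exactly the point where the cited references do extra work. Since the present paper defers to them, you should either consult \cite{KoKuPa} for the precise argument or supply the missing step explicitly rather than by analogy with the equation case.
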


The class of weak supersolutions is closed under taking minimum of two functions, see Korte el al. \cite[Lemma 3.2]{KoKuPa}. 
See also Kinnunen and Lindqvist \cite{KinnunenLindqvist2006}.

\begin{lem}\label{l.min}
Let $1<p<\infty$ and let $\Omega$ be an open set in $\R^n$.
If $u$ and $v$ are weak supersolutions to $\eqref{evo_eqn}$ in $\Omega_T$, then $\min\{u,v\}$ is a weak supersolution in $\Omega_T$. 
\end{lem}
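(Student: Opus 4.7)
The task is to show that for every nonnegative $\varphi \in C_0^\infty(\Omega_T)$,
\[
\iint_{\Omega_T}\bigl(-\min\{u,v\}\,\partial_t\varphi + |\nabla \min\{u,v\}|^{p-2}\nabla \min\{u,v\}\cdot\nabla\varphi\bigr)\,\d x\,\d t \geq 0.
\]
Since $\partial_tu$ and $\partial_tv$ need not exist as functions, my plan is to work with the Steklov averages $u_h(x,t)=\tfrac{1}{h}\int_t^{t+h}u(x,s)\,\d s$ and $v_h$, for which the supersolution property is equivalent to the pointwise-in-$t$ inequality
\[
\int_\Omega \partial_t u_h\,\psi + \bigl[|\nabla u|^{p-2}\nabla u\bigr]_h\cdot\nabla\psi\,\d x \geq 0
\]
for a.e.\ $t\in(0,T-h)$ and every nonnegative $\psi\in W_0^{1,p}(\Omega)$ with compact support in $\Omega$, and analogously for $v_h$.

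The next step is to choose a non-increasing cutoff $\eta_\epsilon\in C^\infty(\R;[0,1])$ with $\eta_\epsilon\equiv 1$ on $(-\infty,0]$, $\eta_\epsilon\equiv 0$ on $[\epsilon,\infty)$, and to test the $u_h$-inequality against $\varphi\,\eta_\epsilon(u_h-v_h)$ and the $v_h$-inequality against $\varphi\bigl(1-\eta_\epsilon(u_h-v_h)\bigr)$. Both are admissible nonnegative test functions, compactly supported inside $\Omega_T$ once $h$ is small enough. Adding the two inequalities and integrating the time terms by parts in $t$, the chain-rule identity
\[
\eta_\epsilon(u_h-v_h)\,\partial_tu_h+\bigl(1-\eta_\epsilon(u_h-v_h)\bigr)\partial_tv_h=\partial_tF_\epsilon(u_h,v_h),
\]
with $F_\epsilon(a,b)=a-\int_0^{a-b}(1-\eta_\epsilon(r))\,\d r$ a smooth approximation of $\min\{a,b\}$, delivers
\[
-\iint_{\Omega_T} F_\epsilon(u_h,v_h)\,\partial_t\varphi\,\d x\,\d t + \iint_{\Omega_T}\bigl[\eta_\epsilon A_h+(1-\eta_\epsilon)B_h\bigr]\cdot\nabla\varphi\,\d x\,\d t + \mathcal R_{h,\epsilon}\geq 0,
\]
where $A_h=[|\nabla u|^{p-2}\nabla u]_h$, $B_h=[|\nabla v|^{p-2}\nabla v]_h$, and the remainder is
\[
\mathcal R_{h,\epsilon}=\iint_{\Omega_T}\varphi\,\eta_\epsilon'(u_h-v_h)(A_h-B_h)\cdot(\nabla u_h-\nabla v_h)\,\d x\,\d t.
\]

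The decisive observation is that $\mathcal R_{h,\epsilon}$ may be discarded: by the monotonicity of $\xi\mapsto|\xi|^{p-2}\xi$, valid for all $p>1$, the factor $(A_h-B_h)\cdot(\nabla u_h-\nabla v_h)$ converges in $L^1_{\loc}$ to a nonnegative expression as $h\to 0$, while $\eta_\epsilon'\leq 0$ and $\varphi\geq 0$, so $\mathcal R_{h,\epsilon}\leq o(1)$ and dropping it preserves the inequality. Passing $h\to 0$ via the $L^p_{\loc}$-convergence of Steklov averages of $u$, $v$, $\nabla u$, $\nabla v$, $|\nabla u|^{p-2}\nabla u$ and $|\nabla v|^{p-2}\nabla v$, and then $\epsilon\to 0$ by dominated convergence (using $F_\epsilon(u,v)\to\min\{u,v\}$ pointwise, $\eta_\epsilon(u-v)\to\chi_{\{u\leq v\}}$ outside a null set, and the Sobolev identity $\nabla\min\{u,v\}=\chi_{\{u\leq v\}}\nabla u+\chi_{\{u>v\}}\nabla v$ a.e.) yields the desired inequality. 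The main technical obstacle is the careful bookkeeping in $\mathcal R_{h,\epsilon}$: one needs strong $L^p_{\loc}$-convergence $\nabla u_h\to\nabla u$ and $\nabla v_h\to\nabla v$, together with an $h$-uniform integrable majorant for $\varphi\,\eta_\epsilon'(u_h-v_h)$, after which the remaining convergences are a routine application of Lebesgue dominated convergence.
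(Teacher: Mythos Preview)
The paper does not actually prove this lemma; it merely states it and refers the reader to Korte--Kuusi--Parviainen \cite[Lemma~3.2]{KoKuPa} and Kinnunen--Lindqvist \cite{KinnunenLindqvist2006}. So there is no in-paper argument to compare against.

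Your direct argument via Steklov averages and a smoothed truncation $\eta_\epsilon$ is the standard route for this kind of result and is essentially correct. The chain-rule identity for $F_\epsilon(a,b)=a-\int_0^{a-b}(1-\eta_\epsilon(r))\,\d r$ is right (indeed $\partial_aF_\epsilon=\eta_\epsilon(a-b)$ and $\partial_bF_\epsilon=1-\eta_\epsilon(a-b)$), and the crucial sign observation for the cross term $\mathcal R_{h,\epsilon}$ is exactly the monotonicity of $\xi\mapsto|\xi|^{p-2}\xi$, which holds for every $p>1$. Two small clarifications would tighten the write-up: first, for fixed $\epsilon$ the factor $\varphi\,\eta_\epsilon'(u_h-v_h)$ is uniformly bounded and converges a.e.\ along a subsequence, while $(A_h-B_h)\cdot(\nabla u_h-\nabla v_h)\to(|\nabla u|^{p-2}\nabla u-|\nabla v|^{p-2}\nabla v)\cdot(\nabla u-\nabla v)$ in $L^1_{\loc}$ by H\"older, so $\mathcal R_{h,\epsilon}$ genuinely converges to a nonpositive quantity and can be discarded \emph{after} the $h$-limit; second, on the contact set $\{u=v\}$ one has $\nabla u=\nabla v$ a.e., so the ambiguity in $\lim_{\epsilon\to 0}\eta_\epsilon(u-v)$ there is harmless when identifying the flux limit with $|\nabla\min\{u,v\}|^{p-2}\nabla\min\{u,v\}$. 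With these points made explicit, the proof is complete and is in the same spirit as the arguments in the cited references.
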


The local boundedness assumption in Theorem \ref{converegnce_result} can be replaced with a uniform Sobolev space bound,
see Kinnunen and Lindqvist \cite{KinnunenLindqvist2006} and Korte et al.\cite[Remark 5.6]{KoKuPa}.
This implies that if $u\in L_{\loc}^p(0,T;W^{1,p}_{\loc}(\Omega))$ and $\min\{u,k\}\in L_{\loc}^p(0,T;W^{1,p}_{\loc}(\Omega))$ is a weak supersolution for every $k=1,2,\dots$, then $u$ is a weak supersolution.
In general, we have to consider more general class of solutions than weak supersolutions. 
Thus we define $p$-supercaloric functions as in Kilpel\"ainen and Lindqvist \cite[Lemma 3.1]{Kilpelainen}, see also  Kinnunen and Lindqvist \cite{KinnunenLindqvist2006}.
\begin{defin}\label{supercaloric}
Let $1<p<\infty$ and let $\Omega$ be an open set in $\R^n$.
A function $u \colon \Omega_T \to (-\infty,\infty]$ is called a $p$-supercaloric function, if
\begin{itemize}
\item[(i)] $u$ is lower semicontinuous,
\item[(ii)] $u$ is finite in a dense subset, and
\item[(iii)] $u$ satisfies the comparison principle on each box $Q_{t_1,t_2}\Subset \Omega_T$: if $h\in C(\overline{Q}_{t_1,t_2})$
 is a weak solution to $\eqref{evo_eqn}$ in $Q_{t_1,t_2}$ and if $h \leq u$ on the parabolic boundary of $Q_{t_1,t_2}$, then $ h\leq u$ in $Q_{t_1,t_2}$.
\end{itemize}
Similarly, an upper semicontinuous function $u$ is said to be a $p$-subcaloric function in $\Omega_T$ if $-u$ is $p$-supercaloric function in $\Omega_T$. 
\end{defin}

Since $p$-supercaloric functions are lower semicontinuous, they are locally bounded from below. 
Thus by adding a constant, we may assume that a $p$-supercaloric function is nonnegative, when we discuss local properties.
If $u$ is a nonnegative $p$-supercaloric function in $\Omega_T$, the zero extension in the past as in \eqref{e.zero_barenblatt}
is a $p$-supercaloric function in $\Omega \times (-\infty,T)$.
The following assertion is a direct consequence of the comparison principle in Definition \ref{supercaloric}. 

\begin{lem}\label{l.caloricmin}
Let $1<p<\infty$ and let $\Omega$ be an open set in $\R^n$.
If $u$ and $v$ are $p$-supercaloric functions in $\Omega_T$, then $\min\{u,v\}$ is a $p$-supercaloric function in $\Omega_T$. 
\end{lem}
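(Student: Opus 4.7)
The plan is to verify the three defining properties of a $p$-supercaloric function in Definition \ref{supercaloric} one by one for $w=\min\{u,v\}$; none of the three should present serious difficulty, as the comparison property of $u$ and $v$ reduces everything to a direct application of Definition \ref{supercaloric}(iii) twice.

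First, for lower semicontinuity, I would invoke the standard fact that the pointwise minimum of two lower semicontinuous functions with values in $(-\infty,\infty]$ is lower semicontinuous; this takes care of (i) of Definition \ref{supercaloric}.

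Next, for the density requirement (ii), I would observe that
\[
\{w<\infty\}=\{u<\infty\}\cup\{v<\infty\},
\]
since $w(x)=\infty$ precisely when both $u(x)=\infty$ and $v(x)=\infty$. Because $\{u<\infty\}$ is already dense in $\Omega_T$ by assumption, so is the union, and therefore $w$ is finite on a dense subset.

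Finally, for the comparison principle (iii), let $Q_{t_1,t_2}\Subset\Omega_T$ be a box and let $h\in C(\overline Q_{t_1,t_2})$ be a weak solution of \eqref{evo_eqn} in $Q_{t_1,t_2}$ satisfying $h\le w$ on $\partial_p Q_{t_1,t_2}$. Then in particular $h\le u$ and $h\le v$ on $\partial_p Q_{t_1,t_2}$, and applying Definition \ref{supercaloric}(iii) separately to $u$ and to $v$ yields $h\le u$ and $h\le v$ throughout $Q_{t_1,t_2}$, hence $h\le w$ there. This establishes the comparison property for $w$.

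The lemma is essentially immediate from the definition, so there is no real obstacle; the only conceptual point worth highlighting is the handling of (ii), where it is important that $\min\{u,v\}$ fails to be finite only where both $u$ and $v$ are infinite, so that the density of $\{u<\infty\}$ alone is more than enough.
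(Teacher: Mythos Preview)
Your proof is correct and matches the paper's approach exactly: the paper does not give a detailed argument but simply states that the assertion is a direct consequence of the comparison principle in Definition~\ref{supercaloric}, and your write-up spells out precisely that verification of the three defining conditions.
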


Supercaloric functions are closed under increasing convergence, see \cite[Proposition 5.1]{KoKuPa}.

\begin{lem}\label{l.limit}
Let $1<p<\infty$ and let $\Omega$ be an open set in $\R^n$. Assume that $u_i$, $i=1,2,\dots$, are $p$-supercaloric functions in $\Omega_T$ such that $u_i\le u_{i+1}$ or every $i=1,2\dots$.
If $u=\lim_{i\to\infty} u$ is finite in a dense set, then $u$ is a $p$-supercaloric function in $\Omega_T$.
\end{lem}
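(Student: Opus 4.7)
The plan is to verify the three conditions of Definition \ref{supercaloric} in turn, the only nontrivial one being the comparison principle (iii). Condition (ii) is part of the hypothesis. For (i), the pointwise supremum of any family of lower semicontinuous functions is lower semicontinuous, so $u=\sup_i u_i$ is lower semicontinuous because each $u_i$ is.

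For (iii), fix a box $Q_{t_1,t_2}\Subset\Omega_T$ and let $h\in C(\overline{Q_{t_1,t_2}})$ be a weak solution to \eqref{evo_eqn} in $Q_{t_1,t_2}$ with $h\le u$ on $\partial_p Q_{t_1,t_2}$. The aim is to show $h\le u$ in $Q_{t_1,t_2}$. Fix $\epsilon>0$ and set $h_\epsilon\defeq  h-\epsilon$, which is again a continuous weak solution to \eqref{evo_eqn}. On the parabolic boundary we have the strict inequality $h_\epsilon=h-\epsilon<h\le u$. I would like to promote this to a strict inequality against some single $u_{i_0}$ via a Dini-type compactness argument.

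The key observation is that each $u_i-h_\epsilon$ is lower semicontinuous (since $u_i$ is and $h_\epsilon$ is continuous), so the set $A_i\defeq  \{(x,t)\in\overline{Q_{t_1,t_2}}:u_i(x,t)>h_\epsilon(x,t)\}$ is relatively open in $\overline{Q_{t_1,t_2}}$. Moreover, since $u_i\nearrow u$ pointwise, we have $\bigcup_i A_i=\{u>h_\epsilon\}$, and by construction the compact set $\partial_p Q_{t_1,t_2}$ is contained in this union. Extracting a finite subcover of $\partial_p Q_{t_1,t_2}$ and using monotonicity of the sequence $\{A_i\}$, I obtain an index $i_0$ such that $\partial_p Q_{t_1,t_2}\subset A_{i_0}$, i.e.\ $h_\epsilon<u_{i_0}$ on $\partial_p Q_{t_1,t_2}$. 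Since $u_{i_0}$ is $p$-supercaloric, its own comparison principle yields $h_\epsilon\le u_{i_0}\le u$ in $Q_{t_1,t_2}$. Letting $\epsilon\to 0^+$ gives $h\le u$ in $Q_{t_1,t_2}$, which is (iii).

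The step I expect to be the most delicate is the compactness argument that upgrades the boundary inequality $h\le u$ to a uniform comparison $h_\epsilon<u_{i_0}$ on $\partial_p Q_{t_1,t_2}$ for a single $i_0$; the $\epsilon$-perturbation of $h$ is precisely what creates the slack needed to exploit lower semicontinuity together with compactness of the parabolic boundary of a box. Everything else is essentially bookkeeping, relying on the closure of weak solutions under constant subtraction and on the definition of $p$-supercaloricity applied to each individual $u_{i_0}$.
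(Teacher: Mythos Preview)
Your argument is correct. The paper does not give its own proof of this lemma; it simply cites \cite[Proposition 5.1]{KoKuPa}. What you wrote is precisely the standard potential-theoretic argument for stability of comparison-defined supersolutions under increasing limits: lower semicontinuity of the limit is immediate, finiteness on a dense set is assumed, and the comparison principle is obtained by the $\epsilon$-perturbation plus Dini/compactness trick you describe. The one point worth making explicit is that $h-\epsilon$ is again a weak solution because \eqref{evo_eqn} is invariant under addition of constants, and that $\partial_p Q_{t_1,t_2}\subset\overline{Q_{t_1,t_2}}\subset\Omega_T$ (since $Q_{t_1,t_2}\Subset\Omega_T$), so each $u_i$ is indeed defined and lower semicontinuous on the compact parabolic boundary; you already note both of these, so nothing is missing.
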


A parabolic comparison principle holds for super- and subcaloric functions, see 
Kilpel\"ainen and Lindqvist \cite[Lemma 3.1]{Kilpelainen} and  Korte et al.~\cite[Lemma 3.5]{KoKuPa}. 

\begin{lem}
Let $1<p<\infty$ and let $\Omega$ be an open and bounded set in $\R^n$.
Assume that $u$ is a $p$-subcaloric function and $v$ is a $p$-supercaloric function in $\Omega_T$.
If $u\leq v$ on $\partial_p\Omega_T$, then $u\leq v$ in $\Omega_T$. 
\end{lem}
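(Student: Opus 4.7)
The plan is to reduce the global comparison to the box-wise one built into Definition~\ref{supercaloric}(iii) via an exhaustion of $\Omega_T$ combined with a sandwich argument using continuous weak solutions furnished by Theorem~\ref{t.existence}. Fix $\epsilon>0$; it suffices to prove $u\le v+\epsilon$ in $\Omega_T$. Choose an increasing sequence of Lipschitz space-time cylinders $D_k\Subset D_{k+1}\Subset\Omega_T$ with $\bigcup_k D_k=\Omega_T$. Using the upper semicontinuity of $u$, the lower semicontinuity of $v$, the compactness of $\overline{\Omega_T}$, and the boundary hypothesis $u\le v$ on $\partial_p\Omega_T$, a short contradiction argument on shrinking neighbourhoods of $\partial_p\Omega_T$ shows that $u\le v+\epsilon$ on $\partial_p D_k$ for all sufficiently large $k$.

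Fix such a $D=D_k$. Truncate to bounded functions by $u_M=\max(u,-M)$ and $v_M=\min(v,M)$, which remain $p$-sub/supercaloric by Lemma~\ref{l.caloricmin} and its subcaloric analogue and, for $M$ large, still satisfy $u_M\le v_M+\epsilon$ on $\partial_p D$. Since $u_M$ is upper semicontinuous and $v_M+\epsilon$ is lower semicontinuous on the compact set $\partial_p D$, the insertion theorem for normal spaces yields a continuous function $\psi$ on $\partial_p D$ with $u_M\le\psi\le v_M+\epsilon$. By Theorem~\ref{t.existence} there is a weak solution $h\in C(\overline{D})$ to \eqref{evo_eqn} in $D$ with $h=\psi$ on $\partial_p D$. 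Applying the comparison in Definition~\ref{supercaloric}(iii), extended from boxes to $D$ via a covering and iteration argument, to the $p$-supercaloric $v_M+\epsilon$ and to the $p$-subcaloric $u_M$ yields $u_M\le h\le v_M+\epsilon$ in $D$. Sending $M\to\infty$, $k\to\infty$, and finally $\epsilon\to0^+$ completes the proof.

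The main obstacle is upgrading Definition~\ref{supercaloric}(iii), which only provides comparison with continuous weak solutions on \emph{boxes}, to the Lipschitz cylinder $D$. This is bridged by exhausting $D$ from within by a countable union of boxes and iterating the box-wise comparison, exploiting the continuity of $h$ together with the forward-in-time propagation intrinsic to the parabolic equation; equivalently, if $h\le v_M+\epsilon$ failed in $D$, it would first fail at some infimal time, which would contradict the box-wise statement applied on a thin slab across that time.
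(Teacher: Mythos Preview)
The paper does not give its own proof of this lemma; it simply refers to Kilpel\"ainen--Lindqvist \cite[Lemma~3.1]{Kilpelainen} and Korte--Kuusi--Parviainen \cite[Lemma~3.5]{KoKuPa}. So there is nothing in the paper to compare with, and the question is whether your argument stands on its own.

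Your overall architecture (an $\epsilon$-fattening, compactness to push the boundary inequality onto $\partial_p D_k$, Hahn--Tong insertion, and a sandwich with a continuous weak solution from Theorem~\ref{t.existence}) is the standard one and is fine up to the point you yourself flag as ``the main obstacle''. The difficulty is that Definition~\ref{supercaloric}(iii) only gives comparison on \emph{boxes}, and you need it on the Lipschitz cylinder $D$. Neither of the two mechanisms you offer actually closes this gap as written.

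For the ``exhaust $D$ by boxes and iterate'' idea: to apply (iii) on a box $Q\times(s,t)\subset D$ you need $h\le v_M+\epsilon$ on the lateral boundary $\partial Q\times(s,t]$, which lies in the interior of $D$, where the inequality is precisely what you are trying to prove. Time-layering does not help because spatially adjacent boxes share lateral faces in the interior. For the infimal-time idea: let $\tau$ be the first time the inequality fails and let $C=\{x:h(x,\tau)\ge v_M(x,\tau)+\epsilon\}$. To apply (iii) on a slab $Q\times(\tau-\delta,\tau+\eta)$ and get a contradiction, you need $h\le v_M+\epsilon$ on $\partial Q\times[\tau,\tau+\eta]$; this requires $\partial Q\cap C=\emptyset$, and you give no reason why such a box $Q$ containing a failure point (or all of $C$) can be found inside the spatial cross-section of $D$. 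If $C$ is large or has complicated geometry there is no obvious choice of $Q$. If instead you keep the slab strictly below $\tau$, you only reprove what the definition of $\tau$ already gives. So the sketch does not yield a contradiction.

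In the cited references this step is handled with more care: one first shows that the comparison in Definition~\ref{supercaloric}(iii) extends from a single box to finite unions of boxes (equivalently, to arbitrary space--time cylinders $\Omega'\times(t_1,t_2)\Subset\Omega_T$) by a genuine induction in time using that the failure set at the critical instant is compactly contained in the spatial domain and can be enclosed in a polyrectangle whose boundary is disjoint from it, and only then runs the sandwich argument. Alternatively one bypasses the issue by approximating $v$ from below by continuous weak supersolutions via the obstacle problem and invoking Theorem~\ref{t.parcoparison} directly. Either route requires real work beyond what you have written; as it stands, the proof is incomplete at exactly the point you identify as the main obstacle.
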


For the following comparison principle, we refer to
Bj\"orn et al.~\cite[Theorem 2.4]{Anders} and Korte et al.~\cite[Corollary 4.6]{KoKuPa}. 
Observe that the comparison is on the topological boundary instead of the parabolic boundary.

\begin{cor}\label{cor3}
Let $1<p<\infty$ and let $U$ be a bounded open set in $\R^{n+1}$.
Assume that $u$ is a $p$-supercaloric function in $U$ and $h\in C(\overline{U})$ is a weak solution to \eqref{evo_eqn} in $U$. 
If $h\leq u$ on $\{(x,t)\in\partial U:t<T\}$, then $h\leq u$ in $\{(x,t)\in U:t<T\}$. 
\end{cor}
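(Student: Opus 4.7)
The plan is to argue by contradiction via a minimum-time argument combined with the box comparison of Definition~\ref{supercaloric}(iii). Fix $\tau<T$; since the conclusion is pointwise, it suffices to establish $h\le u$ on $U\cap\{t\le\tau\}$ for every such $\tau$. As a preliminary, the continuity of $h$ and the lower semicontinuity of $u$ upgrade the boundary hypothesis $h\le u$ on $\partial U\cap\{t<T\}$ to the following quantitative form: for every $\eta>0$, the set $\{(x,t)\in U:h(x,t)-u(x,t)\ge\eta\}$ has positive distance from $\partial U\cap\{t\le\tau\}$ in $\R^{n+1}$.

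Suppose for contradiction that $h(x_0,t_0)>u(x_0,t_0)$ for some $(x_0,t_0)\in U\cap\{t\le\tau\}$, and set $\alpha=h(x_0,t_0)-u(x_0,t_0)>0$. The superlevel set $A=\{h-u\ge\alpha\}\cap U\cap\{t\le\tau\}$ is nonempty and relatively closed in $U\cap\{t\le\tau\}$, since $h-u$ is upper semicontinuous, and by the preliminary step its closure $\overline A$ in $\R^{n+1}$ is a compact subset of $U$. Let $t_*=\min\{t:(x,t)\in\overline A\}$ and pick $(x_*,t_*)\in\overline A\subset U$ realizing this minimum. Upper semicontinuity gives $(h-u)(x_*,t_*)\ge\alpha$, while the minimality of $t_*$ forces the strict inequality $h-\alpha<u$ on all of $U\cap\{t<t_*\}$ (any past point with $h-u=\alpha$ would lie in $A$ and violate the choice of $t_*$). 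Next I would fix $r,\sigma>0$ with $\overline{B_r(x_*)}\times[t_*-\sigma,t_*]\subset U$ and, for each $\delta\in(0,\sigma)$, apply Definition~\ref{supercaloric}(iii) to the continuous weak solution $h-\alpha$ on the box $Q_\delta=B_r(x_*)\times(t_*-\sigma,t_*-\delta)\Subset U$, whose parabolic boundary satisfies $h-\alpha\le u$ by the previous sentence. This yields $h-\alpha\le u$ in $Q_\delta$, and letting $\delta\to 0$ produces the past-cone inequality
\[
h-\alpha\le u\qquad\text{on }B_r(x_*)\times(t_*-\sigma,t_*).
\]

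The decisive step, and the main obstacle, is to transfer this past-cone inequality to the point $(x_*,t_*)$ itself in order to contradict $(h-u)(x_*,t_*)\ge\alpha$. Lower semicontinuity of $u$ alone controls $u(x_*,t_*)$ only from above by the past $\liminf$, which is the wrong direction. I would therefore invoke the past-essential-$\liminf$ property of $p$-supercaloric functions, a standard consequence of Definition~\ref{supercaloric}(iii) via monotone Poisson-type approximation, which together with the continuity of $h$ yields $u(x_*,t_*)\ge h(x_*,t_*)-\alpha$. This forces equality $(h-u)(x_*,t_*)=\alpha$, after which a strong-minimum-principle propagation argument pushes the equality into an open past neighborhood of $(x_*,t_*)$, contradicting the minimality of $t_*$.
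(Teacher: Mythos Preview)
The paper does not supply its own proof of this corollary; it simply refers to Bj\"orn et al.~\cite[Theorem~2.4]{Anders} and Korte et al.~\cite[Corollary~4.6]{KoKuPa}. So the comparison below is between your attempt and what those references do, together with an assessment of whether your argument stands on its own.

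Your minimum-time set-up is fine (modulo the cosmetic point that Definition~\ref{supercaloric}(iii) requires cubes, not balls), and the compactness of $A=\{h-u\ge\alpha\}\cap\{t\le\tau\}$ is correctly derived from the boundary hypothesis. The problem is the ``decisive step''. First, note that applying the box comparison on $Q_\delta\Subset\{t<t_*\}$ is vacuous: the conclusion $h-\alpha\le u$ on $Q_\delta$ is already contained in your Step~3, so the comparison principle has done no work yet. What actually has to be shown is the inequality \emph{at} time $t_*$, and here both tools you invoke are problematic. The ``past-essential-$\liminf$ property'' is not available at this point of the paper: the standard way to obtain it is precisely through the obstacle/Poisson approximation of Theorem~\ref{bdd_supersoln}, whose proof \emph{uses} Corollary~\ref{cor3}; invoking it here is circular. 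The closing ``strong-minimum-principle propagation'' is not a recognised tool for $p$-supercaloric functions (the equation is nonlinear, $u$ is merely supercaloric, and no such principle has been established in the paper), so the contradiction is not reached.

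The arguments in the cited references avoid this trap. Rather than trying to pin down the inequality at a single earliest-time point, one works with a level $\epsilon>0$ and covers the compact slice $\{x:(h-u)(x,t_*)\ge\epsilon\}$ by a box whose \emph{lateral} boundary lies in the open set $\{h-u<\epsilon\}$; upper semicontinuity of $h-u$ then keeps the lateral boundary in this set for a short time \emph{beyond} $t_*$, so Definition~\ref{supercaloric}(iii) can be applied on a box with top strictly above $t_*$, yielding $h-\epsilon\le u$ at $(x_*,t_*)$ and in a forward neighbourhood. Iterating (or using the cylinder comparison of \cite[Lemma~3.5]{KoKuPa} in place of single boxes) gives the contradiction. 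The point is that the comparison box must protrude past $t_*$; your boxes stop at $t_*-\delta$ and therefore cannot see the top.
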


We will apply an existence result for the obstacle problem. 
In order to guarantee continuity of the solution up to the boundary, we assume that the domain where the obstacle problem is considered has Lipschitz boundary.
This condition can be relaxed, see \cite[Theorem 3.1]{kokusi}.

\begin{theo}\label{obstacle problem}
Let $1<p<\infty$. Assume that $\Omega$ is an open and bounded set in $\R^n$ with Lipschitz boundary and let $\psi \in C(\overline{\Omega_T})$. 
	There exists  $u \in C(\overline{\Omega_T})$ that is a weak supersolution  to \eqref{evo_eqn} in $\Omega_T$ with the following properties:
	\begin{itemize}
		\item[(i)] $u\geq \psi$ in $\Omega_T$ and $u=\psi$ on $\partial_p\Omega_T$,
		\item[(ii)] $u$ is a weak solution in the open set $\{(x,t)\in\Omega_T:u(x,t)>\psi(x,t)\}$,
		\item[(iii)] $u$ is the smallest weak supersolution above $\psi$, that is, if $v$ is a weak supersolution in $\Omega_T$ and $v\ge\psi$ in $\Omega_T$, 
		then $v\ge u$ in $\Omega_T$.
	\end{itemize} 
\end{theo}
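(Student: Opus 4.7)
The plan is to construct $u$ as the limit of solutions to penalized equations and then verify properties (i)-(iii) using comparison. For each $\varepsilon > 0$, I would consider the penalized equation
\[
\partial_t u_\varepsilon - \div\bigl(|\nabla u_\varepsilon|^{p-2} \nabla u_\varepsilon\bigr) = \tfrac{1}{\varepsilon}(\psi - u_\varepsilon)_+ \quad \text{in } \Omega_T,
\]
with boundary data $u_\varepsilon = \psi$ on $\partial_p\Omega_T$. Since the penalty is a globally Lipschitz lower-order term in $u_\varepsilon$, existence of a continuous weak solution $u_\varepsilon$ follows by Galerkin approximation together with a Minty-type monotonicity argument, or alternatively by a fixed-point iteration using Theorem~\ref{t.existence} at each step. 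The Lipschitz assumption on $\partial\Omega$ and continuity of $\psi$ give continuity up to $\partial_p\Omega_T$, as in \cite[Theorem 3.1]{kokusi}.

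Next I would establish $\varepsilon$-independent estimates. Testing with $(u_\varepsilon - M)_+$ for $M \defeq \max_{\overline{\Omega_T}} \psi$, and using that the penalty vanishes on $\{u_\varepsilon \geq \psi\}$, yields $u_\varepsilon \leq M$, and an analogous lower estimate shows that $\{u_\varepsilon\}$ is uniformly bounded. A Caccioppoli-type energy estimate then gives a uniform bound on $\nabla u_\varepsilon$ in $L^p(\Omega_T)$, and testing the equation against $(\psi - u_\varepsilon)_+$ combined with the energy identity controls $\int_{\Omega_T}\tfrac{1}{\varepsilon}(\psi - u_\varepsilon)_+ \,\d x\,\d t$ uniformly, so $(\psi - u_\varepsilon)_+ \to 0$ in $L^1$. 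Along a subsequence $u_\varepsilon \to u$ almost everywhere and weakly in the parabolic Sobolev space; monotonicity of the $p$-Laplacian identifies the limit of the nonlinear flux, and the uniform $L^\infty$-bound together with Theorem~\ref{converegnce_result} shows that $u$ is a weak supersolution to \eqref{evo_eqn}.

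Properties (i)-(iii) then follow. For (i): $u \geq \psi$ almost everywhere from the penalty bound, continuity propagates this to $\overline{\Omega_T}$, and $u_\varepsilon = \psi$ on $\partial_p\Omega_T$ gives $u = \psi$ there in the limit. For (ii): on any open set $D \Subset \{u > \psi\}$, pointwise convergence and continuity of $\psi$ force $u_\varepsilon > \psi$ on compact subsets of $D$ for $\varepsilon$ small, the penalty vanishes there, and $u$ inherits the weak-solution property. For (iii): if $v$ is any weak supersolution to \eqref{evo_eqn} with $v \geq \psi$ in $\Omega_T$, then $\tfrac{1}{\varepsilon}(\psi - v)_+ = 0$ in $\Omega_T$, so $v$ is also a supersolution to the penalized equation; a comparison argument for the penalized equation, modeled on the proof of Theorem~\ref{t.parcoparison} and using that $u \mapsto -\tfrac{1}{\varepsilon}(\psi - u)_+$ is nondecreasing, yields $u_\varepsilon \leq v$ in $\Omega_T$, and passing to the limit gives $u \leq v$.

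The main obstacle is the passage to the limit in the nonlinear flux $|\nabla u_\varepsilon|^{p-2}\nabla u_\varepsilon$, which in the fast-diffusion range $1 < p < 2$ is only H\"older-continuous and requires a Minty-type monotonicity argument combined with the compactness provided by the uniform energy estimates. A secondary technical point is the continuity of $u$ up to $\partial_p\Omega_T$, which is why the Lipschitz regularity of $\partial\Omega$ is invoked; the cited \cite[Theorem 3.1]{kokusi} handles this under exactly the present hypotheses.
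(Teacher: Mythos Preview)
Your penalization approach is a genuine alternative to what the paper does. The paper gives no construction at all: it simply observes that the proof of \cite[Theorem~3.1]{kokusi} (an iterative Schwarz-type construction on space-time boxes) goes through verbatim once one replaces the convergence result \cite[Theorem~2.7]{kokusi} and the Dirichlet existence result---the only places where $p>\frac{2n}{n+2}$ is used---by the full-range Theorem~\ref{converegnce_result} and Theorem~\ref{t.existence}. Your route is more self-contained but also requires more infrastructure; the paper's route is essentially a one-line remark that the existing argument already covers all $1<p<\infty$.

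There are, however, two real gaps in your sketch. First, your appeal to \cite[Theorem~3.1]{kokusi} for continuity up to $\partial_p\Omega_T$ is circular: that is precisely the result you are trying to extend to the full range $1<p<\infty$, and in any case it concerns the obstacle solution, not your penalized approximants $u_\varepsilon$. To get $u\in C(\overline{\Omega_T})$ you need interior and boundary H\"older estimates for $u_\varepsilon$ that are \emph{uniform in $\varepsilon$}; these do not follow from the bare energy bounds you derive, and you have to invoke the DiBenedetto regularity theory with explicit tracking of the dependence on the right-hand side $\tfrac{1}{\varepsilon}(\psi-u_\varepsilon)_+$ (whose $L^1$ norm you control, but whose $L^\infty$ norm you do not). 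Without this, ``continuity propagates'' and ``$u_\varepsilon=\psi$ on $\partial_p\Omega_T$ gives $u=\psi$ there in the limit'' are unjustified.

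Second, your argument for (iii) uses comparison for the penalized equation on all of $\Omega_T$, which needs $v\ge u_\varepsilon=\psi$ on $\partial_p\Omega_T$. The hypothesis only gives $v\ge\psi$ in the open set $\Omega_T$, and a general weak supersolution need not have boundary values. The construction in \cite{kokusi} sidesteps this because minimality is built into the iterative definition; with penalization you must either add a hypothesis on $v$ near the boundary or localize the comparison, and you should say which.
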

\begin{proof}
For $p>\frac{2n}{n+2}$, see Korte et al.\cite[Theorem 3.1]{kokusi}. 
A careful inspection of the proof reveals that the requirement $p>\frac{2n}{n+2}$ is used to obtain \cite[Theorem 2.7]{kokusi} 
and Theorem \ref{t.existence} in the space-time boxes with continuous boundary data which have been used in \cite[Construction 3.2]{kokusi}. 
A proof of  \cite[Theorem 2.7]{kokusi} for $p>\frac{2n}{n+2}$ is given in Korte et al. \cite[Theorem 5.3]{KoKuPa}, see also Kinnunen and Lindqvist \cite[Lemma 4.3]{KinnunenLindqvist2006} for $p>2$.
For the corresponding convergence result in the full range $1<p<\infty$ we refer to the Theorem $\ref{converegnce_result}$. 
\end{proof}

Next we show that every bounded $p$-supercaloric function $u$ is a weak supersolution in $\Omega_T$. 
In this case $u\in L_{\loc}^p(0,T;W^{1,p}_{\loc}(\Omega))$ and
\[
\iint_{\Omega_T} \left(-u \partial_t \varphi + |\nabla u|^{p-2}\nabla u \cdot \nabla \varphi \right)\,\d x\,\d t\ge 0
\]
for every nonnegative $\varphi \in C^\infty_0(\Omega_T)$. 
This result extends Kinnunen and Lindqvist \cite[Theorem 1.1]{KinnunenLindqvist2006} and Korte et al. \cite[Theorem 5.8]{KoKuPa}.
The argument is similar to  \cite[Theorem 5.8]{KoKuPa}, but we repeat it here to show that it applies in the full range $1<p<\infty$.

\begin{theo}\label{bdd_supersoln}
Let $1<p<\infty$ and let $\Omega$ be an open set in $\R^n$. 
If $u$ is a $p$-supercaloric function in $\Omega_T$ and $u$ is locally bounded, then $u$ is a weak supersolution to \eqref{evo_eqn} in $\Omega_T$.
\end{theo}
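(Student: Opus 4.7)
The strategy is to realize $u$ locally as an increasing limit of continuous weak supersolutions $u_k$ produced by the obstacle problem, and then apply Theorem~\ref{converegnce_result}. I would work in an arbitrary space-time box $Q_{t_1,t_2}\Subset\Omega_T$, where $Q$ has Lipschitz boundary and $u$ is bounded; after adding a constant we may assume $0\le u\le M$ on $\overline{Q}_{t_1,t_2}$. Since $u$ is lower semicontinuous on the compact set $\overline{Q}_{t_1,t_2}$, standard inf-convolutions produce an increasing sequence $(\psi_k)\subset C(\overline{Q}_{t_1,t_2})$ with $0\le\psi_k\le u$ and $\psi_k(x,t)\nearrow u(x,t)$ for every $(x,t)\in\overline{Q}_{t_1,t_2}$.

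For each $k$, Theorem~\ref{obstacle problem} applied on $Q_{t_1,t_2}$ with obstacle $\psi_k$ yields a continuous weak supersolution $u_k\in C(\overline{Q}_{t_1,t_2})$ with $u_k\ge\psi_k$, $u_k=\psi_k$ on $\partial_p Q_{t_1,t_2}$, and $u_k$ a weak solution on the open set $V_k=\{u_k>\psi_k\}$. The minimality of $u_k$ gives $u_k\le u_{k+1}$ since $\psi_k\le\psi_{k+1}$. The crucial step is the pointwise comparison $u_k\le u$ on $Q_{t_1,t_2}$: outside $V_k$ this is immediate because $u_k=\psi_k\le u$, while on $V_k$ we note that $u_k=\psi_k$ on $\partial V_k\cap Q_{t_1,t_2}$ by continuity and $u_k=\psi_k$ on $\overline{V_k}\cap\partial_p Q_{t_1,t_2}$ by the obstacle boundary condition, so $u_k\le u$ on $\partial V_k\cap\{t<t_2\}$ (using $\psi_k\le u$ and the lower semicontinuity of $u$). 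Corollary~\ref{cor3} with $U=V_k$ and the continuous weak solution $h=u_k$ then yields $u_k\le u$ in $V_k$.

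Thus $\psi_k\le u_k\le u\le M$ pointwise, and the squeeze forces $u_k\nearrow u$ everywhere in $Q_{t_1,t_2}$. Since the sequence is uniformly bounded by $M$, Theorem~\ref{converegnce_result} shows that $u$ is a weak supersolution on $Q_{t_1,t_2}$; exhausting $\Omega_T$ by such boxes gives $u\in L^p_{\loc}(0,T;W^{1,p}_{\loc}(\Omega))$ and the desired variational inequality. The main obstacle is the comparison step, because $u_k$ is a weak solution only on the non-coincidence set $V_k$, so Corollary~\ref{cor3} cannot be invoked on the whole box. The key observation is that restricting the comparison to $V_k$ turns $u_k$ into a genuine weak solution, and the equality $u_k=\psi_k\le u$ on $\partial V_k\cap\{t<t_2\}$ supplies precisely the one-sided barrier on the past boundary that Corollary~\ref{cor3} requires; the validity of this argument for the full range $1<p<\infty$ is inherited from Theorems~\ref{converegnce_result} and \ref{obstacle problem}, which is why the restriction $p\ge 2$ in earlier versions can be removed.
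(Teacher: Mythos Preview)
Your proposal is correct and follows essentially the same route as the paper's proof: approximate $u$ from below by continuous obstacles $\psi_k$, solve the obstacle problem on a box to obtain $u_k$, compare $u_k\le u$ via Corollary~\ref{cor3} applied on the non-coincidence set $V_k=\{u_k>\psi_k\}$, and pass to the limit using Theorem~\ref{converegnce_result}. The only cosmetic differences are that you invoke inf-convolutions explicitly and record the monotonicity $u_k\le u_{k+1}$ from minimality, neither of which the paper spells out (and neither is strictly needed, since the squeeze $\psi_k\le u_k\le u$ already forces pointwise convergence).
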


\begin{proof}
Since $u$ is lower semicontinuous, there exists a sequence of continuous functions $\psi_{i}$, $i=1,2,\dots$, 
such that $\psi_{1}\leq \psi_{2}\leq\dots\leq u$
and
\[
\lim_{i\to\infty} \psi_{i}(x,t)= u(x,t)
\]
for every $(x,t)\in\Omega_T$.
We claim that $u$ is a weak supersolution in $\Omega_T$. 
To conclude this it is sufficient to show that $u$ is a weak supersolution in every space-time box $Q_{t_1, t_2}\Subset\Omega_T$.

By Theorem $\ref{obstacle problem}$, for every $i=1,2,\dots$, there exists a solution $v_{i}\in C(\overline{Q_{t_1, t_2}})$ to the obstacle problem in $Q_{t_1, t_2}$ with the obstacle $\psi_{i}$. 
The function $v_{i}$ is a continuous weak solution in the open set 
\[
U=\{(x,t)\in Q_{t_1, t_2}: v_{i}(x,t)>\psi_{i}(x,t)\}\subset\R^{n+1}.
\]
Since $v_{i}=\psi_{i}$ on $\partial_p Q_{t_1, t_2}$, $v_{i}\in C(\overline{Q_{t_1, t_2}})$ and $\psi_{i}\in C(\overline{Q_{t_1, t_2}})$, it follows that $v_{i}=\psi_{i}$ on the boundary $\partial U$, except possibly when $t=t_2$. 
By Corollary~$\ref{cor3}$, we conclude that $u\geq v_{i}$ in $U$ for every $i=1,2,\dots$. 
Consequently, $\psi_{i}\leq v_{i}\leq u$ in $Q_{t_1, t_2}$ for every $i=1,2,\dots$ and thus
\[
\lim_{i\to\infty}v_{i}(x,t)= u(x,t)
\]
for every $(x,t)\in Q_{t_1, t_2}$.
According to the Theorem $\ref{converegnce_result}$, the function $u$ is a weak supersolution in $Q_{t_1, t_2}$.
Since this holds true for every $Q_{t_1, t_2}\Subset\Omega_T$ we conclude that $u$ is a weak supersolution in $\Omega_T$. 
\end{proof} 

\section{Barenblatt solutions}

This section discusses $p$-supercaloric functions with a Barenblatt type behaviour in the supercritical range.

\begin{example}
Let $\frac{2n}{n+1}<p<2$.
The Barenblatt solution $U(x,t)$ in \eqref{e.sing_barenblatt} is a weak solution to $\eqref{evo_eqn}$ in $\R^n\times(0,\infty)$. 
It follows that the zero extension $u(x,t)$ as in \eqref{e.zero_barenblatt}
is a $p$-supercaloric function in $\R^n\times\R$ according to Definition \ref{supercaloric} and it satisfies 
\[
\iint_{\R^n\times\R} \left(-u \partial_t \varphi + |\nabla u|^{p-2}\nabla u \cdot \nabla \varphi \right)\,\d x\,\d t
=M\varphi(0),
\]
with $M>0$, for every $\varphi \in C^\infty_0(\Omega_T)$. 
This means that $u$ solves the equation
$$
\partial_tu - \div(|\nabla u|^{p-2}\nabla u) = M \delta
$$
in the weak sense, where $\delta$ is Dirac's delta. 
It follows that the weak gradient $\nabla u(\cdot,t)$ exists, in the sense of~\eqref{e.limgrad}, for almost every $t$ and it is locally integrable to any power $0<q < p - 1 + \frac{1}{n+1}$. 
However,  the function $u$ is not a weak supersolution to \eqref{evo_eqn} in $\R^n\times\R$, since 
$$
\int_{t_1}^{t_2}\int_{B(0,r)} |\nabla u|^{p-1+\frac{1}{n+1}}\,\d x\,\d t=\infty
$$
for every $r>0$, $t_1\le0$ and $t_2>0$.
This implies that $u\notin L_{\loc}^p(\R;W^{1,p}_{\loc}(\R^n))$.
Observe, that the truncations $\min\{u,k\}$, $k=1,2,\dots$ belong to $L_{\loc}^p(\R;W^{1,p}_{\loc}(\R^n))$ and are weak solutions to \eqref{evo_eqn} in $\R^{n+1}$
by Theorem \ref{bdd_supersoln}.
\end{example}

Next we recall a Caccioppoli type inequality for nonnegative unbounded weak supersolutions, see Kuusi \cite[Lemma 2.2]{Kuusi2008}.

\begin{lem}\label{unbdd_caccio}
	Let $1<p<\infty$, $0<\eps<1$ and let $\Omega$ be an open set in $\R^n$. Assume that $u$ is a nonnegative weak supersolution in $\Omega_T$.
	There exists a constant $c = c(p,\eps)$ such that
	\begin{align*}
	\iint_{\Omega_T} &|\nabla u|^pu^{-\eps-1}\varphi^p\,\d x\, \d t + \esssup_{0<t<T} \int_\Omega u^{1-\eps} \ph^p\, \d x \\
	&\leq c \iint_{\Omega_T}u^{p-1-\eps} |\nabla \ph|^p  \, \d x\, \d t + c\iint_{\Omega_T}u^{1-\eps} |\partial_t (\ph^p)| \,\d x\, \d t
	\end{align*}
	for every nonnegative test function $\ph \in C_0^\infty(\Omega_T)$. 
\end{lem}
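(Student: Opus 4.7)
The plan is to test the weak supersolution inequality against $\psi := (u+\delta)^{-\eps}\ph^p$, where $\delta>0$ regularizes $u$ from below so that the negative power is well defined. Since $\psi$ depends on $u$ and weak supersolutions do not admit a classical time derivative, I would first rewrite the weak formulation via Steklov averages $[\,\cdot\,]_h$ in time, test the Steklov-averaged equation against $\psi_h := ([u]_h+\delta)^{-\eps}\ph^p$, which is admissible for fixed $h>0$, and then pass to the limits $h\to 0$ and $\delta\to 0$ at the end.

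Computing $\nabla\psi = -\eps(u+\delta)^{-\eps-1}\ph^p\nabla u + p(u+\delta)^{-\eps}\ph^{p-1}\nabla\ph$, the spatial part of the tested inequality produces
\begin{equation*}
\eps\iint_{\Omega_T}(u+\delta)^{-\eps-1}|\nabla u|^p\ph^p\,\d x\, \d t \leq p\iint_{\Omega_T}(u+\delta)^{-\eps}\ph^{p-1}|\nabla u|^{p-1}|\nabla\ph|\,\d x\, \d t + (\text{time term}).
\end{equation*}
Young's inequality with exponents $\tfrac{p}{p-1}$ and $p$ applied to the cross gradient term yields $\tfrac{\eps}{2}(u+\delta)^{-\eps-1}|\nabla u|^p\ph^p + c(p,\eps)(u+\delta)^{p-1-\eps}|\nabla\ph|^p$; the first piece is absorbed on the left and the second contributes to the right-hand side of the target inequality.

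The parabolic contribution is handled via the chain rule identity $(u+\delta)^{-\eps}\partial_t u = \tfrac{1}{1-\eps}\partial_t(u+\delta)^{1-\eps}$, meaningful at the Steklov level precisely because $1-\eps>0$; integration by parts in time against $\ph^p$ produces the term $c\iint(u+\delta)^{1-\eps}|\partial_t(\ph^p)|\,\d x\, \d t$ on the right-hand side. To generate the $\esssup_{0<t<T}$ on the left, I would multiply $\ph^p$ by a piecewise linear time cutoff $\chi_\tau^\eta$ that equals $1$ on $(0,\tau)$ and drops to $0$ on $(\tau,\tau+\eta)$; letting $\eta\to 0$ produces the value $\int_\Omega(u+\delta)^{1-\eps}\ph^p(\cdot,\tau)\,\d x$ at an arbitrary Lebesgue point $\tau\in(0,T)$, and taking the supremum over $\tau$ yields the essential supremum bound. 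Finally $h\to 0$ via standard Steklov convergence and $\delta\to 0$ via Fatou's lemma (all integrands being nonnegative) deliver the stated inequality.

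The main obstacle is the rigorous justification of the parabolic computation, since $\partial_tu$ exists only distributionally for a general weak supersolution; the Steklov averaging circumvents this, but one has to verify convergence of the nonlinear quantities $([u]_h+\delta)^{-\eps-1}|\nabla[u]_h|^p$ and $([u]_h+\delta)^{1-\eps}$ to their pointwise limits in $L^1_{\loc}$, which is standard in parabolic Caccioppoli arguments. The restriction $0<\eps<1$ plays a double role: the exponent $-\eps-1$ is negative (so the good left-hand side gradient term is recovered only after Young absorption), while $1-\eps>0$ is needed for the chain-rule identity to yield a nonnegative power of $u+\delta$ whose time derivative is integrable against $\partial_t(\ph^p)$.
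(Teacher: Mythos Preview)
The paper does not prove this lemma; it is quoted from Kuusi~\cite[Lemma~2.2]{Kuusi2008} without argument. Your plan is exactly the standard proof one finds there: test with $(u+\delta)^{-\eps}\ph^p$, regularize in time by Steklov averages, absorb the cross term via Young's inequality, handle the parabolic part through the primitive $(u+\delta)^{1-\eps}$, and pass $h\to0$, $\delta\to0$.

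There is one slip worth noting: the orientation of your time cutoff is reversed. Because you test a \emph{supersolution} with a \emph{decreasing} function of $u$, the identity $\partial_t u\,(u+\delta)^{-\eps}=\tfrac{1}{1-\eps}\partial_t(u+\delta)^{1-\eps}$ together with the supersolution sign makes $t\mapsto\int_\Omega(u+\delta)^{1-\eps}\ph^p\,\d x$ formally nondecreasing. A cutoff $\chi$ that equals $1$ on $(0,\tau)$ and then drops to $0$ on $(\tau,\tau+\eta)$ therefore produces $\int_\Omega(u+\delta)^{1-\eps}\ph^p(\cdot,\tau)\,\d x$ on the \emph{right-hand side} of the resulting inequality, not the left. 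Use instead the increasing cutoff $\chi=0$ on $(0,\tau-\eta)$, linear on $(\tau-\eta,\tau)$, $\chi=1$ on $(\tau,T)$; this is still admissible since $\ph$ vanishes near $t=T$. Then $\partial_t\chi\ge0$, the slice integral lands on the left alongside the gradient term (restricted to $t>\tau$), and Young's absorption goes through because the cross term carries the same factor $\chi$. Running the estimate once with $\chi\equiv1$ (for the full-domain gradient term) and once with the increasing cutoff (for the $\esssup$), and then adding, gives the stated inequality. The rest of your outline, including the roles of Steklov averaging, Fatou's lemma in $\delta$, and the use of $0<\eps<1$, is correct.
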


The following version of Sobolev's inequality will be useful for us, see DiBenedetto \cite[Proposition 3.1, p.\ 7]{Di} and DiBenedetto et al. \cite[Proposition 4.1]{digi}.

\begin{lem}\label{sobolev}
	Let $0<m<\infty$ and $1\le p<\infty$.
	Assume that $u\in L^p_{\loc}(0,T;W_{\loc}^{1,p}(\Omega))$ and $\varphi\in C^\infty_0(\Omega_T)$. Then there exists a constant $c=c(p,m,n)$ such that 
	\[
	\iint_{\Omega_T}|\varphi u|^q \d x\d t\leq c\iint_{\Omega_T}|\nabla(\varphi u)|^p\d x \d t\left(\esssup_{0<t<T}\int_\Omega|\varphi u|^m \d x\right)^{\frac{p}{n}},
	\]
	where $q=p+\frac{pm}{n}$.  
\end{lem}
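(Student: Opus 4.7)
The plan is to prove the purely spatial version of the inequality slicewise in time, and then integrate in $t$, pulling the $L^m$-factor out of the time integral as an essential supremum. For almost every $t\in(0,T)$ I would set $w(x)=\varphi(x,t)u(x,t)$; since $\varphi\in C_0^\infty(\Omega_T)$, the slice $w$ has compact support in $\Omega$, so extension by zero yields $w\in W^{1,p}(\R^n)$ with compact support for a.e. $t$.

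The crux is the spatial inequality
\begin{equation*}
\int_\Omega |w|^q\,\d x \le c\int_\Omega |\nabla w|^p\,\d x \,\biggl(\int_\Omega |w|^m\,\d x\biggr)^{p/n}.
\end{equation*}
In the range $1\le p<n$ I would derive this by writing $q=p+\tfrac{pm}{n}$ and splitting $|w|^q=|w|^{p}\cdot|w|^{pm/n}$. H\"older's inequality with conjugate exponents $\tfrac{n}{n-p}$ and $\tfrac{n}{p}$ then gives
\begin{equation*}
\int_\Omega |w|^q\,\d x \le \biggl(\int_\Omega |w|^{p^\ast}\,\d x\biggr)^{p/p^\ast}\biggl(\int_\Omega |w|^m\,\d x\biggr)^{p/n},
\end{equation*}
with $p^\ast=\tfrac{np}{n-p}$, after which the classical Sobolev embedding $W_0^{1,p}(\R^n)\hookrightarrow L^{p^\ast}(\R^n)$ bounds the first factor by $c\int_\Omega|\nabla w|^p\,\d x$. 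For $p\ge n$ the exponent $p^\ast$ is unavailable, but the equivalent Gagliardo--Nirenberg form
\begin{equation*}
\|w\|_{L^q(\R^n)}\le c\|\nabla w\|_{L^p(\R^n)}^{p/q}\|w\|_{L^m(\R^n)}^{pm/(nq)}
\end{equation*}
holds directly by the interpolation inequality on $\R^n$; the dimensional balance that selects the scaling exponent $\theta=p/q$ is precisely the identity $q=p+\tfrac{pm}{n}$, so the two sides match after raising to the power $q$.

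With the spatial estimate in hand, integrating in $t$ over $(0,T)$ and bounding the inner $L^m$-integral by its essential supremum in $t$ produces
\begin{equation*}
\iint_{\Omega_T}|\varphi u|^q\,\d x\,\d t \le c \iint_{\Omega_T}|\nabla(\varphi u)|^p\,\d x\,\d t \,\biggl(\esssup_{0<t<T}\int_\Omega|\varphi u|^m\,\d x\biggr)^{p/n},
\end{equation*}
which is the claim. The main obstacle is not analytic but the exponent bookkeeping: one must simultaneously match the H\"older split, the Sobolev exponent $p^\ast$, and (for $p\ge n$) the Gagliardo--Nirenberg scaling relation to the target $q=p+\tfrac{pm}{n}$. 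Once this accounting is done, no further difficulty arises, since the cut-off $\varphi$ automatically places $w(\cdot,t)$ in $W_0^{1,p}$ with compact support.
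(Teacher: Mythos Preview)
The paper does not give its own proof of this lemma; it merely cites DiBenedetto \cite[Proposition~3.1, p.~7]{Di} and DiBenedetto--Gianazza--Vespri \cite[Proposition~4.1]{digi}. Your argument---slicing in time, applying the spatial Gagliardo--Nirenberg/Sobolev inequality to the compactly supported function $w=\varphi(\cdot,t)u(\cdot,t)$, and then integrating in $t$ while pulling out the $L^m$-factor as an essential supremum---is exactly the standard proof found in those references, and your exponent bookkeeping (the H\"older split for $p<n$ and the GN scaling $\theta=p/q$ for $p\ge n$) is correct.
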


We prove a general local integrability result for unbounded $p$-supercaloric functions. 
The obtained integrability exponent is sharp as shown by the Barenblatt solution.
We apply a Moser type iteration scheme. 

\begin{theo}\label{integbility_1}
	Let $\frac{2n}{n+1}<p<2$ and let $\Omega$ be an open set in $\R^n$. Assume that $u$ is a $p$-supercaloric function in $\Omega_T$. 
	If $ u \in L_{\loc}^{s}(\Omega_T) $	for some $s > \frac{n}{p}(2-p)$, then 
	$u\in L_{\loc}^q(\Omega_T)$ whenever $0<q<p-1+\frac{p}{n}$.
\end{theo}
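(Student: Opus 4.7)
The plan is to run a Moser-type iteration on the bounded truncations $u_k=\min\{u,k\}$ and then pass to the limit $k\to\infty$. After reducing to $u\ge 0$ by adding a local constant (supercaloricity is preserved), Lemma~\ref{l.caloricmin} and Theorem~\ref{bdd_supersoln} say that each $u_k$ is a nonnegative bounded weak supersolution to \eqref{evo_eqn}. To handle the singularity of $u_k^{-\epsilon-1}$ at $\{u_k=0\}$ in the Caccioppoli weights, I would work with $u_k+\eta$ for $\eta>0$ (still a weak supersolution, since \eqref{evo_eqn} is invariant under additive constants) and send $\eta\to 0^+$ at the end of each step by monotone convergence.

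Next I set up one Moser step. For $\epsilon\in(0,p-1)$, let $\alpha=(p-1-\epsilon)/p\in(0,1)$ and $w_k=(u_k+\eta)^{\alpha}$, so that $|\nabla w_k|^p=\alpha^p(u_k+\eta)^{-\epsilon-1}|\nabla u_k|^p$ is exactly the weighted gradient controlled by Lemma~\ref{unbdd_caccio}. Apply Lemma~\ref{sobolev} to $\phi w_k$ with auxiliary parameter $m=(1-\epsilon)/\alpha$, chosen so that $w_k^m=(u_k+\eta)^{1-\epsilon}$ matches the other term on the left-hand side of the Caccioppoli inequality. Bounding $|\nabla(\phi w_k)|^p$ by its two natural pieces and invoking Caccioppoli yields, on nested boxes $Q'\Subset Q\Subset\Omega_T$,
\[
\iint_{Q'}(u_k+\eta)^{\sigma(1+p/n)+(p-2)}\,\d x\,\d t \le C\left(\iint_Q\bigl((u_k+\eta)^{\sigma}+(u_k+\eta)^{p-1-\epsilon}\bigr)\,\d x\,\d t\right)^{1+p/n},
\]
with $\sigma=1-\epsilon\in(0,1)$. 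Because $p<2$ gives $p-1-\epsilon<\sigma$, the secondary power is absorbed by the main one on bounded subsets, so the right-hand side is essentially controlled by $\iint u_k^{\sigma}$.

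The iteration is governed by the affine map $T(\sigma)=\sigma(1+p/n)+(p-2)$, whose unique fixed point is $\sigma^{\ast}=n(2-p)/p$ and whose slope $1+p/n>1$ gives $T(\sigma)>\sigma$ precisely for $\sigma>\sigma^{\ast}$. Under the hypothesis $s>n(2-p)/p=\sigma^{\ast}$ I would pick an initial exponent $\sigma_0\in(\sigma^{\ast},\min\{s,1\})$ and iterate $\sigma_{j+1}=T(\sigma_j)$ while $\sigma_j<1$; since $\sigma_j-\sigma^{\ast}=(1+p/n)^j(\sigma_0-\sigma^{\ast})$, after finitely many steps $\sigma_{K+1}\ge 1$, giving a uniform-in-$k$ bound on $u_k$ in $L^1_{\loc}$ of a smaller subcylinder. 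Given any target $q<p-1+p/n$, I then fix $\delta>0$ with $T(1-\delta)=p-1+p/n-\delta(1+p/n)>q$ and apply one final Moser step at $\sigma=1-\delta$, whose input bound $u_k\in L^{1-\delta}_{\loc}$ is supplied by the preceding $L^1_{\loc}$ bound and the inclusion on bounded domains. Fatou's lemma and $u_k\nearrow u$ then transfer the uniform $L^q$ bound from $u_k$ to $u$.

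The hard part will be the bookkeeping in the iteration: justifying each Caccioppoli application for the truncations (handling both the regularization $\eta\to 0^+$ and the absorption of the secondary term $u_k^{p-1-\epsilon}$), and shrinking the cylinders at each step so that the compounded constants remain finite. Conceptually, the sharp role of the assumption $s>n(2-p)/p$ is transparent: this is precisely the threshold at which $T$ switches from contracting to expanding, and also the integrability beyond which the infinite point source solution \eqref{razorblade} fails, which is exactly why the dichotomy stated in the introduction is the natural statement.
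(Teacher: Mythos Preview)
Your proposal is correct and follows essentially the same Moser iteration as the paper: truncate to $u_k=\min\{u,k\}$, combine the Caccioppoli estimate (Lemma~\ref{unbdd_caccio}) with the parabolic Sobolev inequality (Lemma~\ref{sobolev}) using exactly the exponents $\alpha=(p-1-\epsilon)/p$ and $m=p(1-\epsilon)/(p-1-\epsilon)$, and iterate the affine map $\sigma\mapsto\sigma(1+p/n)-(2-p)$ with fixed point $\tfrac{n}{p}(2-p)$ until $\sigma\ge 1$, then finish with one more step. The paper makes one simplification you may want to adopt: it normalizes to $u\ge 1$ rather than $u\ge 0$, which removes the need for the $\eta$-regularization and makes the absorption of the secondary power $u_k^{p-1-\epsilon}\le u_k^{1-\epsilon}$ immediate; this also means no shrinking of cylinders is needed, since each step yields $u\in L^{s_i}_{\loc}(\Omega_T)$ on all of $\Omega_T$ and the next test function can be chosen freely.
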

\begin{proof}
Since $u$ is locally bounded from below, by adding a constant, we may assume that $u\ge1$. 
We may assume $0<s < 1$, otherwise we have $u\in L_{\loc}^1(\Omega_T)$ and we can directly proceed to the last step of the proof. 

Since  $p>\frac{2n}{n+1}$, we have $\frac{n}{p}(2-p)<1$. 
We consider the truncations $u_k=\min\{u, k\}$, $k=2, 3,\dots$. 
By Theorem \ref{bdd_supersoln}, the function $u_k$ is a weak supersolution to~\eqref{evo_eqn} and it satisfies the Caccioppoli estimate in Lemma $\ref{unbdd_caccio}$. Let $\varphi \in C_0^\infty(\Omega_T)$, $0\leq \varphi \leq 1$ and $\varphi=1$ in a compact subset of $\Omega_T$. 
For the first step of iteration, we choose $0<\epsilon<1$ in Lemma $\ref{unbdd_caccio}$ so that $1-\epsilon=s$. 
Since $u_k\ge1$, we obtain
\begin{align*}
\iint_{\Omega_T}\left|\nabla\left(\varphi u_k^{\frac{s-(2-p)}{p}}\right)\right|^p\,\d x\,\d t
& \leq c(p,s)\left(\iint_{\Omega_T}u_k^{s-(2-p)}|\nabla\varphi|^p\,\d x\,\d t
+ \iint_{\Omega_T} u_k^{s-2}|\nabla u_k|^p\varphi^p\,\d x\,\d t\right)\\
& \leq c(p,s)\left(\iint_{\Omega_T}u_k^{s-(2-p)}|\nabla\varphi|^p\,\d x\,\d t
+ \iint_{\Omega_T} u_k^{s}\left|\partial_t(\varphi^p)\right|\,\d x\,\d t\right)\\
& \leq c(p,s)\iint_{\Omega_T}u_k^{s}\left(|\nabla\varphi|^p+  \left|\partial_t(\varphi^p)\right|\right)\, \d x\,\d t\\
& \leq c(p,s)\iint_{\Omega_T}u^{s}\left(|\nabla\varphi|^p+\left|\partial_t(\varphi^p)\right| \right)\,\d x\,\d t<\infty,
\end{align*}
for every $k=2,3,\dots$.
By Lemma $\ref{sobolev}$, we have
\begin{align*}
\iint_{\Omega_T} \varphi^q u_k^{s-(2-p)+\frac{s p}{n}}\,\d x\,\d t 
&=\iint_{\Omega_T}\left|\varphi u_k^{\frac{s-(2-p)}{p}}\right|^q \,\d x\,\d t \\
&\leq c(p,s,n)\iint_{\Omega_T}\left|\nabla \left( \varphi u_k^{\frac{s-(2-p)}{p}} \right) \right|^p\,\d x\,\d t \left(\esssup_{0<t<T}\int_\Omega\varphi^m  u_k^s\,\d x\right)^{\frac{p}{n}},
\end{align*}
with $m=\frac{ps}{s-(2-p)}$ and $q=p+\frac{pm}{n}$.
Observe that 
\[
\ph(x,t)^m=\left(\ph(x,t)^{\frac{s}{s-(2-p)}}
\right)^p,
\] 
with $\frac mp > 1$ so that $\ph^{\frac mp} \in C^1_0(\Omega_T)$, and thus we may apply Lemma $\ref{unbdd_caccio}$ for second term as well.  We have
\begin{align*}
\esssup_{0<t<T}\int_\Omega\varphi^m  u_k^s\,\d x
& \leq c(p,s)\left(\iint_{\Omega_T}u_k^{s-(2-p)}|\nabla(\varphi^{\frac mp})|^p\,\d x\,\d t
+ \iint_{\Omega_T} u_k^{s}\left|\partial_t(\varphi^m)\right|\,\d x\,\d t\right)\\
&\leq c(p,s)\iint_{\Omega_T}u^{s}\left(|\nabla(\varphi^{\frac mp})|^p+\left|\partial_t(\varphi^{m})\right| \right)\,\d x\,\d t\\
&\le c(p,s)\iint_{\Omega_T}u^{s}\varphi^{m-p}\left(|\nabla\varphi|^p+\left|\partial_t(\varphi^p)\right| \right)\,\d x\,\d t<\infty
\end{align*}
for every $k=2,3,\dots$.
By combining the estimates and applying Lebesgue's monotone convergence theorem we conclude 
$$
\iint_{\Omega_T} \varphi^q u^{s-(2-p)+\frac{s p}{n}}\,\d x\,\d t
=\lim_{k\to\infty}\iint_{\Omega_T} \varphi^q u_k^{s-(2-p)+\frac{s p}{n}}\,\d x\,\d t 
<\infty.
$$
Since $s >\frac{n}{p}(2-p)$, it follows that $s-(2-p)+\frac{s p}{n}> s$. 
Thus $u\in L_{\loc}^{s(1+\frac{p}{n})-(2-p)}(\Omega_T)$.

Denote $s_0= s$ and $s_1=s_0(1+\frac{p}{n})-(2-p)$. 
After the first step of iteration we have $u\in L_{\loc}^{s_1}(\Omega_T)$.
If $s_1 <1$, then in the second step of iteration we choose $1-\epsilon= s_1$ 
and again combine the Caccioppoli inequality and the Sobolev inequality with $m = \frac{p s_1}{s_1-(2-p)}$. 
We continue in this way and obtain an increasing sequence of numbers $s_i$, satisfying 
\[
s_i = s_{i-1}\left(1+\tfrac{p}{n}\right) - (2-p),
\qquad i=1,2,\dots,
\]
which can be written in terms of $s_0$ as
\[
s_i =\left(1+\tfrac{p}{n}\right)^i\left(s_0-\tfrac{n}{p}(2-p)\right)+\tfrac{n}{p}(2-p),
\qquad i=0,1,2,\dots.
\]
After the $i$th step of iteration we have $u\in L_{\loc}^{s_i}(\Omega_T)$.
After a finite number of iterations we have $s_i\ge1$
and thus $u\in L_{\loc}^1(\Omega_T)$.

In order to pass from $u\in L_{\loc}^1(\Omega_T)$ to $u\in L_{\loc}^{p-1+\frac{p}{n}-\sigma} (\Omega_T)$ for every $\sigma>0$ we apply a similar argument once more.
Let $\eps = \frac{\sigma}{1+\frac pn}$. Then 
$p-1-\varepsilon+\tfrac{p(1-\varepsilon)}{n}
=p-1+\tfrac pn-\sigma$.
By Lemma $\ref{sobolev}$, we have
\begin{align*}
\iint_{\Omega_T} \varphi^q u_k^{p-1+\frac pn-\sigma}\,\d x\,\d t 
&=\iint_{\Omega_T} \varphi^q u_k^{p-1-\varepsilon+\frac{p(1-\varepsilon)}{n}}\,\d x\,\d t 
=\iint_{\Omega_T}\left|\varphi u_k^{\frac{p-1-\varepsilon}{p}}\right|^q \,\d x\,\d t \\
&\leq c(p,\varepsilon,n)\iint_{\Omega_T}\left|\nabla \left( \varphi u_k^{\frac{p-1-\varepsilon}{p}} \right) \right|^p\,\d x\,\d t 
\left(\esssup_{0<t<T}\int_\Omega\varphi^m  u_k^{1-\varepsilon}\,\d x\right)^{\frac{p}{n}},
\end{align*}
with $m=\frac{p(1-\eps)}{p-1-\eps}$ and $q=p+\frac{pm}{n}$.

By Lemma $\ref{unbdd_caccio}$, with $u_k\ge1$, this implies
\begin{equation}\label{e.gradient_est}
\begin{split}
\iint_{\Omega_T}&\left|\nabla\left(\varphi u_k^{\frac{p-1-\varepsilon}{p}}\right)\right|^p\,\d x\,\d t\\
& \leq c(p,\varepsilon)\left(\iint_{\Omega_T}u_k^{p-1-\varepsilon}|\nabla\varphi|^p\,\d x\,\d t
+ \iint_{\Omega_T} u_k^{-\varepsilon-1}|\nabla u_k|^p\varphi^p\,\d x\,\d t\right)\\
& \leq c(p,\varepsilon)\left(\iint_{\Omega_T}u_k^{p-1-\varepsilon}|\nabla\varphi|^p\,\d x\,\d t
+ \iint_{\Omega_T} u_k^{p-1-\varepsilon}\left|\partial_t(\varphi^p)\right|\,\d x\,\d t\right)\\
& \leq c(p,\varepsilon)\iint_{\Omega_T}u_k^{1-\varepsilon}\left(|\nabla\varphi|^p+  \left|\partial_t(\varphi^p)\right|\right)\, \d x\,\d t\\
& \leq c(p,\varepsilon)\iint_{\Omega_T}u\left(|\nabla\varphi|^p+\left|\partial_t(\varphi^p)\right|\right)\,\d x\,\d t<\infty
\end{split}
\end{equation}
for every $k=2,3,\dots$.
For the second term, Lemma $\ref{unbdd_caccio}$ implies
\begin{align*}
\esssup_{0<t<T}\int_\Omega\varphi^m  u_k^{1-\varepsilon}\,\d x
& \leq c(p,\varepsilon)\left(\iint_{\Omega_T}u_k^{p-1-\varepsilon}|\nabla(\varphi^{\frac mp})|^p\,\d x\,\d t
+ \iint_{\Omega_T} u_k^{p-1-\varepsilon}\left|\partial_t(\varphi^m)\right|\,\d x\,\d t\right)\\
&\leq c(p,\varepsilon)\iint_{\Omega_T}u^{1-\varepsilon}\left(|\nabla(\varphi^{\frac mp})|^p+\left|\partial_t(\varphi^{m})\right| \right)\,\d x\,\d t\\
&\le c(p,\varepsilon)\iint_{\Omega_T}u^{1-\varepsilon}\varphi^{m-p}\left(|\nabla\varphi|^p+\left|\partial_t(\varphi^p)\right| \right)\,\d x\,\d t\\
&\le c(p,\varepsilon)\iint_{\Omega_T}u\left(|\nabla\varphi|^p+\left|\partial_t(\varphi^p)\right| \right)\,\d x\,\d t
<\infty
\end{align*}
for every $k=2,3,\dots$.
Thus
\[
\iint_{\Omega_T} \varphi^q u^{p-1+\frac pn-\sigma}\,\d x\,\d t 
=\lim_{k\to\infty}\iint_{\Omega_T} \varphi^q u_k^{p-1+\frac pn-\sigma}\,\d x\,\d t<\infty,
\]
which implies $u\in L_{\loc}^{p-1+\frac{p}{n}-\sigma} (\Omega_T)$.
\end{proof}

Let $u$ be a nonnegative $p$-supercaloric function in $\Omega_T$.
It may happen that $u$ does not have a locally integrable weak derivative. 
In this case we consider the truncations $u_k=\min\{u, k\}\in L^p_{\loc}(\Omega_T)$, $k=1,2,\dots$ and define the weak gradient by
\begin{equation}\label{e.limgrad}
\nabla u = \lim_{k\to \infty} \nabla u_k,
\end{equation}
which is a well defined measurable function but does not necessarily belong to $L^1_{\loc}(\Omega_T)$.
If $\nabla u\in L^1_{\loc}(\Omega_T)$, then $\nabla u(\cdot,t)$ is the Sobolev gradient of $u(\cdot,t)$ for almost every $t$ with $0<t<T$.

\begin{theo} \label{thm:gradient_integbility}
Let $\frac{2n}{n+1}<p<2$ and let $\Omega$ be an open set in $\R^n$. Assume that $u$ is a $p$-supercaloric function in $\Omega_T$ with 
$u \in L_{\loc}^{s}(\Omega_T)$ for some $s > \frac{n}{p}(2-p)$, 
Then $\nabla u\in L^q_{\loc}(\Omega_T)$ whenever $0<q<p-1+\frac{1}{n+1}$. 
\end{theo}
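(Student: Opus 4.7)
The plan is to derive the gradient integrability from the integrability of $u$ provided by Theorem \ref{integbility_1}, combining the Caccioppoli inequality of Lemma \ref{unbdd_caccio} with H\"older's inequality. As in the preceding proof, we may normalise so that $u \geq 1$ and work with the truncations $u_k = \min\{u, k\}$, which by Theorem \ref{bdd_supersoln} are bounded weak supersolutions in $L^p_{\loc}(0,T;W^{1,p}_{\loc}(\Omega))$; the weak gradient $\nabla u$ is defined via \eqref{e.limgrad}, and $|\nabla u_k|$ converges pointwise almost everywhere to $|\nabla u|$.

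Fix a nonnegative $\varphi \in C_0^\infty(\Omega_T)$ and parameters $\eps \in (0,1)$, $q \in (0,p)$ to be chosen. The factorisation
\begin{equation*}
\varphi^p |\nabla u_k|^q = \bigl(\varphi^p u_k^{-\eps-1}|\nabla u_k|^p\bigr)^{q/p} \bigl(\varphi^p u_k^{q(\eps+1)/(p-q)}\bigr)^{(p-q)/p}
\end{equation*}
followed by H\"older's inequality with conjugate exponents $p/q$ and $p/(p-q)$ gives
\begin{equation*}
\iint_{\Omega_T} \varphi^p |\nabla u_k|^q \, \d x \, \d t \leq \left(\iint_{\Omega_T} \varphi^p u_k^{-\eps-1}|\nabla u_k|^p\right)^{\!q/p} \left(\iint_{\Omega_T} \varphi^p u_k^{q(\eps+1)/(p-q)}\right)^{\!(p-q)/p}.
\end{equation*}
Applying Lemma \ref{unbdd_caccio} to $u_k$, and using $u_k \leq u$ on the positive exponents appearing on the right, dominates the first factor by
\begin{equation*}
c\iint_{\Omega_T} u^{p-1-\eps}|\nabla \varphi|^p \, \d x \, \d t + c\iint_{\Omega_T} u^{1-\eps}|\partial_t(\varphi^p)| \, \d x \, \d t,
\end{equation*}
which is finite and independent of $k$ thanks to Theorem \ref{integbility_1}: the inequality $p-1-\eps < p-1+p/n$ is immediate, while $1-\eps < p-1+p/n$ reduces to $\eps > 2-p-p/n$, which is satisfied for every $\eps > 0$ precisely because $p > \frac{2n}{n+1}$.

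The second H\"older factor is finite whenever $q(\eps+1)/(p-q) < p-1+p/n$, again by Theorem \ref{integbility_1}. Rearranging gives $q < p(p-1+p/n)/(\eps+p+p/n)$, whose supremum as $\eps \to 0^+$ is exactly $\tfrac{n}{n+1}(p-1+\tfrac{p}{n}) = p-1+\tfrac{1}{n+1}$. Hence for any prescribed $q < p-1+\tfrac{1}{n+1}$ we may choose $\eps > 0$ so small that both exponent conditions hold; the resulting bound on $\iint_{\Omega_T} \varphi^p |\nabla u_k|^q \, \d x \, \d t$ is uniform in $k$, and Fatou's lemma then yields $\iint_{\Omega_T} \varphi^p |\nabla u|^q \, \d x \, \d t < \infty$, completing the proof. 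The core of the argument is the exponent bookkeeping: the supercritical hypothesis $p > \tfrac{2n}{n+1}$ is precisely what makes the Caccioppoli right-hand side integrable against the $L^{1-\eps}$-bound on $u$, while the sharp threshold $p-1+\tfrac{1}{n+1}$ emerges from optimising the H\"older split.
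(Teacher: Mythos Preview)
Your proof is correct and follows essentially the same route as the paper's: both arguments split $|\nabla u_k|^q$ via H\"older's inequality into a Caccioppoli-controlled factor $u_k^{-1-\eps}|\nabla u_k|^p$ and a factor $u_k^{q(1+\eps)/(p-q)}$ handled by Theorem~\ref{integbility_1}, then optimise over $\eps$ to reach the threshold $p-1+\tfrac{1}{n+1}$. Your version differs only cosmetically---you carry a cutoff $\varphi$ rather than restricting to $\Omega'\times(t_1,t_2)$, and you make the limiting step explicit via Fatou's lemma where the paper simply notes the uniform bound on $\nabla u_k$ in $L^q_{\loc}$.
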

\begin{proof}
Since $u$ is locally bounded from below, by adding a constant, we may assume that $u\ge1$. 
Let $0<t_1<t_2<T$, $\Omega' \Subset \Omega$ and $\eps \in (0,1)$.
We consider the truncations $u_k=\min\{u, k\}$, $k=1,2,\dots$. 
By H\"older's inequality, we have
\begin{align*}
\int_{t_1}^{t_2} \int_{\Omega' }& |\nabla u_k|^q \,\d x\,\d t 
= \int_{t_1}^{t_2} \int_{\Omega' } \left( u_k^{-\frac{1+\eps}{p}} |\nabla u_k| \right)^q u_k^{q\frac{1+\eps}{p}} \,\d x\,\d t \\
&\leq \left( \int_{t_1}^{t_2} \int_{\Omega' } u_k^{-1-\eps} |\nabla u_k|^p\, \d x \,\d t \right)^\frac{q}{p} 
\left( \int_{t_1}^{t_2} \int_{\Omega' } u_k^{q\frac{1+\eps}{p-q}}\,\d x\,\d t\right)^{1-\frac{q}{p}} \\
&= \left( \frac{p}{p-1-\eps} \right)^q \left( \int_{t_1}^{t_2} \int_{\Omega' } \left|\nabla \left( u_k^\frac{p-1-\eps}{p} \right) \right|^p \, \d x\, \d t \right)^\frac{q}{p} 
\left( \int_{t_1}^{t_2} \int_{\Omega' } u_k^{q\frac{1+\eps}{p-q}} \, \d x\, \d t \right)^{1-\frac{q}{p}},
\end{align*}
where the first integral is uniformly bounded with respect to $k$ as in \eqref{e.gradient_est} and the second integral is finite whenever
$q\frac{1+\eps}{p-q} <  p - 1 + \frac{p}{n}$
by Theorem~\ref{integbility_1}. From this we can conclude that the right hand side is finite for any $0<q < p - 1 + \frac{1}{n+1}$.
Now we have that $\nabla u_k$ is uniformly bounded in $L_{\loc}^q(\Omega_T)$. 
\end{proof}

\begin{rem}\label{r.limit_gradient}
If $p>\frac{2n+1}{n+1}$, then $p - 1 + \frac{1}{n+1}>1$ and
it follows that $\nabla u(\cdot,t)$ in Lemma \ref{thm:gradient_integbility} is the Sobolev gradient for almost every $t$ with $0<t<T$.
However, if $\frac{2n}{n+1} < p \leq\frac{2n+1}{n+1}$, then $p-1+\frac{1}{n+1} \leq1$. 
In this case the Sobolev gradient may not exist and the weak gradient $\nabla u$ is interpreted as in~\eqref{e.limgrad}.
\end{rem}

\begin{rem}
Let $\frac{2n}{n+1}<p<2$ and let $\Omega$ be an open set in $\R^n$. Assume that $u$ is a $p$-supercaloric function in $\Omega_T$ 
with $u \in L_{\loc}^{s}(\Omega_T)$ for some $s > \frac{n}{p}(2-p)$.
By Theorem \ref{thm:gradient_integbility} we have $\nabla u\in L^{p-1}_{\loc}(\Omega_T)$.
Theorem \ref{bdd_supersoln} implies
\begin{align*}
\iint_{\Omega_T}& \left(- u \partial_t \ph + |\nabla u|^{p-2}\nabla u \cdot \nabla \ph \right) \, \d x\, \d t \\
&=\lim_{k\to\infty}\iint_{\Omega_T} \left(- u_k \partial_t \ph + |\nabla u_k|^{p-2}\nabla u_k \cdot \nabla \ph \right) \, \d x\, \d t 
\geq 0
\end{align*}
for every nonnegative $\ph \in C_0^\infty(\Omega_T)$.
By the Riesz representation theorem there exists a nonnegative Radon measure $\mu$ on $\R^{n+1}$ such that
\[
\iint_{\Omega_T}\left(- u \partial_t \ph + |\nabla u|^{p-2}\nabla u \cdot \nabla \ph \right) \, \d x\, \d t 
=\iint_{\Omega_T}\varphi\,\d \mu 
\]
for every $\varphi\in C_0^\infty(\Omega_T)$. This means that $u$ is a solution to the measure data problem
\[
\partial_tu -\div\left( |\nabla u |^{p-2} \nabla u \right)=\mu.
\]
Note that $\nabla u\notin L^{p}_{\loc}(\Omega_T)$, in general. 
The integrability of the gradient of a weak solution to a measure data problem in the fast diffusion case has been studied Baroni \cite{Baroni2017}.
\end{rem}

\begin{rem}
Let $\frac{2n}{n+1}<p<2$ and let $\Omega$ be an open set in $\R^n$.  Assume that $u$ is a nonnegative $p$-supercaloric function in $\Omega_T$ 
with $u \in L_{\loc}^{2-p}(\Omega_T)$.
Then $\nabla \log u \in L^p_{\loc}(\Omega_T)$, where $\nabla$ stands for the usual Sobolev gradient. 
This property holds for $u$ satisfying the assumptions in Theorem \ref{integbility_1}, since $2-p < \frac{n}{p}(2-p)$. The logarithmic estimate can be obtained as in \cite{KinnunenLindqvist2005} by taking $\eps = p-1$ in Lemma~\ref{unbdd_caccio}. 
\end{rem}

\begin{rem}
By the following dichotomy it follows that $p$-supercaloric function is either locally integrable to any power smaller than $p -1 + \frac{p}{n} > 1$, or it is not integrable to a power $\frac{n}{p}(2-p) < 1$. Observe that as $p \searrow\frac{2n}{n+1}$ we have $\frac{n}{p}(2-p) \nearrow 1$ and $p -1 + \frac{p}{n} \searrow 1$. 
The gap between the exponents becomes smaller and shrinks to a point as $p$ approaches the critical value from above. 
\end{rem}

We have the following characterization for Barenblatt type $p$-supercaloric functions.

\begin{theo} \label{thm:barenblatt}
	Let $\frac{2n}{n+1}<p<2$ and let $\Omega$ be an open set in $\R^n$. Assume that $u$ is a  $p$-supercaloric function in $\Omega_T$. Then the following assertions are equivalent:
	\begin{itemize}
		\item[(i)] $u\in L^q_{\loc}(\Omega_T)$ for some $q> \frac{n}{p}(2-p)$,
		\item[(ii)] $u \in L^{\frac{n}{p}(2-p)}_{\loc} (\Omega_T)$,
		\item[(iii')] there exists $\alpha\in(\frac{n}{p}(2-p), 1)$ such that $$\esssup_{\delta<t<T-\delta} \int_{\Omega'} |u(x,t)|^\alpha\,\d x<\infty,$$
		whenever $\Omega'\Subset \Omega$ and $\delta\in(0, \frac{T}{2})$,
		\item[(iii)] 
		 $$\esssup_{\delta<t<T-\delta} \int_{\Omega'} |u(x,t)|\,\d x<\infty,$$
		whenever $\Omega'\Subset \Omega$ and $\delta\in(0, \frac{T}{2})$.
	\end{itemize}
\end{theo}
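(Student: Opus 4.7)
The plan is to prove the four-way equivalence by completing the cycle $(i) \Rightarrow (ii) \Rightarrow (iii') \Rightarrow (iii) \Rightarrow (i)$. The embeddings $(i) \Rightarrow (ii)$ (H\"older on relatively compact subsets, since $q > \frac{n}{p}(2-p)$) and $(iii) \Rightarrow (iii')$ (H\"older in space on the bounded set $\Omega'$, since $\alpha < 1$) are immediate. The implication $(iii) \Rightarrow (i)$ follows by integrating the $L^\infty_t L^1_x$ bound over a compact time interval to obtain $u \in L^1_{\loc}(\Omega_T)$ and then applying Theorem \ref{integbility_1} with $s=1$; this is legitimate since $p > \frac{2n}{n+1}$ is equivalent to $1 > \frac{n}{p}(2-p)$, which places $s=1$ strictly above the critical exponent.

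The substantive part of the proof is $(ii) \Rightarrow (iii')$. My plan is to apply Lemma \ref{unbdd_caccio} to the truncations $u_k = \min\{u,k\}$, which are weak supersolutions by Theorem \ref{bdd_supersoln}, with the parameter choice $\epsilon = 1-\alpha$ for some $\alpha \in \bigl(\frac{n}{p}(2-p),\, 1\bigr)$ chosen close to the critical exponent. This yields
\[
\esssup_t \int_\Omega u_k^\alpha \varphi^p \,\mathrm{d} x \leq c \iint_{\Omega_T} u_k^{p-2+\alpha} |\nabla\varphi|^p \,\mathrm{d} x \,\mathrm{d} t + c \iint_{\Omega_T} u_k^\alpha |\partial_t \varphi^p| \,\mathrm{d} x \,\mathrm{d} t.
\]
The first term on the right is controlled by (ii), since for $\alpha$ sufficiently close to $\frac{n}{p}(2-p)$ one has $p-2+\alpha \leq \frac{n}{p}(2-p)$, using that $\frac{(n+p)(2-p)}{p} > \frac{n(2-p)}{p}$. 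The delicate issue, which I expect to be the main obstacle of the proof, is that the second term carries the same exponent $\alpha$ as the left-hand side and cannot be dominated directly by the hypothesis. I plan to use a tensor-product cutoff $\varphi(x,t) = \eta(x)\theta(t)$, for which $\iint u_k^\alpha \eta^p |\partial_t \theta^p| \leq \bigl(\int |\partial_t \theta^p|\,\mathrm{d} t\bigr) \esssup_t \int u_k^\alpha \eta^p$, and run a De Giorgi style iteration on nested pairs of time intervals so that the factor $\int |\partial_t \theta_j^p|\,\mathrm{d} t$ can be made small at each step; this should allow the critical term to be absorbed into the LHS and produce a $k$-uniform bound. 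Monotone convergence as $k\to\infty$ then yields (iii').

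For the final step $(iii') \Rightarrow (iii)$, I would first apply Theorem \ref{integbility_1} with $s=\alpha > \frac{n}{p}(2-p)$ from (iii') to conclude $u \in L^q_{\loc}(\Omega_T)$ for every $q < p-1+\frac{p}{n}$, and then Theorem \ref{thm:gradient_integbility} to obtain $|\nabla u|^{p-1} \in L^1_{\loc}(\Omega_T)$. Testing the weak supersolution equation for each $u_k$ against a suitably mollified cutoff of the form $\eta(x)\chi_{(t_0,t)}(s)$ gives, for almost every pair $t_0 < t$, the time-slice inequality
\[
\int_\Omega \eta \, u_k(\cdot,t) \,\mathrm{d} x \leq \int_\Omega \eta \, u_k(\cdot,t_0) \,\mathrm{d} x + c \int_{t_0}^{t} \int_\Omega |\nabla u_k|^{p-1} |\nabla \eta| \,\mathrm{d} x \,\mathrm{d} s.
\]
Averaging over $t_0$ in a compact subinterval transfers the initial slice into an $L^1$ norm over a small space-time box and, combined with the $k$-uniform $L^1$ integrability of $|\nabla u_k|^{p-1}$, produces a $k$-uniform bound on $\esssup_t \int_\Omega \eta \, u_k(\cdot,t)\,\mathrm{d} x$; passing to the limit $k \to \infty$ by monotone convergence then yields (iii), closing the cycle.
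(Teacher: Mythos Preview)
Your cycle breaks at the step $(ii)\Rightarrow(iii')$. The absorption scheme you propose cannot work: for any cutoff $\theta\in C^\infty_0(0,T)$ with $\theta=1$ on a subinterval, the quantity $\int_0^T|\partial_t(\theta^p)|\,\d t$ equals the total variation of $\theta^p$ and is therefore bounded below by $2$, regardless of how the nested intervals are chosen. Hence in the estimate
\[
\esssup_{t}\int_\Omega u_k^\alpha\eta^p\,\d x\le C+c\Bigl(\int|\partial_t(\theta^p)|\,\d t\Bigr)\,\esssup_{t\in\spt\theta}\int_\Omega u_k^\alpha\eta^p\,\d x
\]
the factor multiplying the right-hand esssup is at least $2c$, with $c=c(p,\alpha)$ fixed by Lemma~\ref{unbdd_caccio}, and that esssup is taken over a \emph{larger} time interval than the one on the left. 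No De Giorgi or hole-filling iteration closes such a linear relation with a coefficient you cannot push below a fixed positive constant. Equivalently, the Moser recursion $s_j=s_{j-1}(1+\tfrac pn)-(2-p)$ from the proof of Theorem~\ref{integbility_1} has $\tfrac np(2-p)$ as a fixed point, so starting exactly at the endpoint exponent yields no gain; this is precisely why hypothesis~(ii) alone does not feed into the Caccioppoli--Sobolev machinery.

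The paper does not attempt a direct energy argument for this implication. It proves $(ii)\Rightarrow(i)$ and $(i)\Rightarrow(iii)$ by contraposition via Theorem~\ref{thm:monster}, whose proof rests on the intrinsic weak Harnack inequality (Lemma~\ref{lem:weak_harnack}) together with the $L^1$--Harnack inequality for solutions (Lemma~\ref{lem:L1_harnack}): if $\esssup_t\int_{\Omega'}u\,\d x=\infty$ along some time sequence, a covering argument locates a point $x_0$ at which $u(x,t)\gtrsim|x-x_0|^{-p/(2-p)}$ on a later time interval, and this singularity already fails to lie in $L^{\frac np(2-p)}_{\loc}$. That Harnack input is exactly what bridges the endpoint exponent, and it is not accessible from Caccioppoli alone.

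Your route for $(iii')\Rightarrow(iii)$ via the time-slice inequality and the $k$-uniform bound on $|\nabla u_k|^{p-1}$ from Theorem~\ref{thm:gradient_integbility} is a correct and more elementary alternative to the paper's contrapositive Harnack argument for that particular implication, and your $(iii)\Rightarrow(i)$ is fine. But without an independent proof of $(ii)\Rightarrow(i)$ or $(ii)\Rightarrow(iii')$ the cycle is not closed.
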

\begin{proof}
Since $u$ is locally bounded from below, by adding a constant, we may assume that $u\ge1$. 

(i) $\implies$ (ii): Direct consequence of H\"older's inequality. 

(iii) $\implies$ (iii'): H\"older's inequality. 
(iii') $\implies$ (i): Follows from the iteration argument in the proof of Theorem~\ref{integbility_1}. 

(i) $\implies$ (iii'): Let $\Omega'\Subset\Omega$ and $\delta\in(0, \frac{T}{2})$. 
We consider the truncations $u_k=\min\{u, k\}$, $k=2,3,\dots$. 
Since $u_k$ is a weak supersolution in $\Omega_T$, it satisfies the Caccioppoli inequality in Lemma $\ref{unbdd_caccio}$. 
Choose $\epsilon>0$ small enough such that $1-\epsilon = q$. By the assumption, $u\in L^{1-\epsilon}_{\loc}(\Omega_T)$. 
By choosing a test function $\varphi\in C_0^\infty(\Omega_T)$ with $0\leq \varphi \leq 1$ and $\varphi =1$ in $\Omega'\times (\delta, T-\delta)$ 
in Lemma \ref{unbdd_caccio}, we have
	\begin{align*}
	\esssup_{\delta<t<T-\delta}\int_{\Omega'} u_k^{1-\epsilon} \,\d x 
	&\leq c(p,\varepsilon)\left(\iint_{\Omega_T} u^{p-1-\epsilon}_k|\nabla\varphi|^p\,\d x\,\d t
	+\iint_{\Omega_T} u^{1-\epsilon}_k\left|\partial_t (\varphi^p)\right|\,\d x\,\d t\right)\\
	&\leq c(p,\varepsilon)\iint_{\Omega_T} u^{1-\epsilon}_k\left(|\nabla\varphi|^p+ \left|\partial_t (\varphi^p)\right|\right)\d x\,\d t\\
	&\leq c(p,\varepsilon)\iint_{\Omega_T} u^{1-\epsilon}\left(|\nabla\varphi|^p+ \left|\partial_t(\varphi^p)\right|\right)\d x\,\d t.
	\end{align*}
	The claim follows by letting $k\rightarrow\infty$. 
	
(ii) $\implies$ (i): Follows by showing contraposition  $\neg$(i) $\implies$ $\neg$(ii) by using Theorem~\ref{thm:monster}. 

(i) $\implies$ (iii): Follows by showing contraposition  $\neg$(iii) $\implies$ $\neg$(i) by using Theorem~\ref{thm:monster}.
\end{proof}

\begin{rem}
$\nabla u\in L^q_{\loc}(\Omega_T)$, whenever $0<q<p-1+\frac{1}{n+1}$, is another equivalent assertion in Theorem~\ref{thm:barenblatt} for $p >\frac{2n+1}{n+1}$.
This follows from the fact that $\nabla u$ is a weak gradient in Sobolev's sense and $u \in L^1_{\loc} (\Omega_T)$. 
By the proof of Theorem \ref{integbility_1}, this implies (i) in Theorem~\ref{thm:barenblatt}. 
\end{rem}

\section{Infinite point source solutions}

This section discusses $p$-supercaloric functions in the supercritical case that are not covered by Theorem \ref{thm:barenblatt}.
The leading example is the infinite point source solution in \eqref{razorblade}.
In contrast with the friendly giant in the slow diffusion case $p>2$, it is the singularity in space, not in time, that fails to be locally integrable to an appropriate power. 
Roughly speaking, at every time slice the singularity of the infinite point source solution is worse power type singularity than
\[
u(x,t)=|x|^{- \frac{n-p}{p-1}}, 
\quad 1<p<2,
\]
based on the fundamental solution to the elliptic $p$-Laplace equation.

\begin{example}
Let $\frac{2n}{n+1}<p<2$.
The infinite point source solution $U(x,t)$ in \eqref{razorblade} is a weak solution to $\eqref{evo_eqn}$ in $\R^n\times(0,\infty)$. 
It follows that the zero extension $u(x,t)$ as in \eqref{e.zero_barenblatt} is a $p$-supercaloric function in $\R^n\times\R$. 
However, $u(x,t)$ is not a weak supersolution to \eqref{evo_eqn} in any domain which contains a neighbourhood of $x = 0$,
since
\[
\int_{t_1}^{t_2}\int_{B(0,r)} |\nabla u|^{\frac{n}{2}(2-p)}\,\d x\,\d t=\infty,
\] 
for every $r>0$, $t_1>-\infty$ and $t_2>0$. 
This implies that $u\notin L_{\loc}^p(\R;W^{1,p}_{\loc}(\R^n))$.
Observe, that the truncations $\min\{u,k\}$, $k=1,2,\dots$ belong to $L_{\loc}^p(\R;W^{1,p}_{\loc}(\R^n))$ and are weak solutions to \eqref{evo_eqn}
by Theorem \ref{bdd_supersoln}.
\end{example}

\begin{example}
Let $\frac{2n}{n+1}<p<2$.
First show that there exist supercaloric functions that are not locally integrable to any positive power. 
Consider
$$
u(x,t)= \left( \frac{ct}{|x|^q} \right)^\frac{1}{2-p},
$$
where $q \ge p$ and the constant $c$ will be determined later. 
We show that $u$ is a supersolution to $\eqref{evo_eqn}$ in $(B(0,1) \setminus\{0\}) \times (0,\infty)$.
A direct computation shows that
\[
\partial_t u (x,t)= \tfrac{c^\frac{1}{2-p}}{2-p} t^\frac{p-1}{2-p} |x|^{-\frac{q}{2-p}}
\quad\text{and}\quad
\nabla u(x,t) = - \tfrac{q}{2-p} (ct)^\frac{1}{2-p} |x|^{-\frac{q}{2-p}-1} \frac{x}{|x|}.
\]
It follows that
\begin{align*}
\div\left( |\nabla u(x,t) |^{p-2} \nabla u(x,t) \right)= \left( \tfrac{q}{2-p} \right)^{p-1} \left(\tfrac{q}{2-p} -q + p -n\right) (ct)^\frac{p-1}{2-p} |x|^{q-p-\frac{q}{2-p}}.
\end{align*}
It is easy to see that
$\frac{q}{2-p} -q + p -n > 0$
for any $q \geq p$, and 
$\frac{q}{2-p} \geq -q + p + \frac{q}{2-p}$,
so that $|x|^{q-p-\frac{q}{2-p}} \leq |x|^{-\frac{q}{2-p}}$ for every $x\in B(0,1)$. By choosing
\begin{equation}\label{e.constant}
c=c(n,p,q)= (2-p) \left( \tfrac{q}{2-p} \right)^{p-1} \left( \tfrac{q}{2-p} - q + p -n \right),
\end{equation}
we have
\[
\partial_t u(x,t) \geq \div\left( |\nabla u(x,t) |^{p-2} \nabla u(x,t) \right)
\]
for every $(x,t)\in B(0,1) \times (0,\infty)$.
Lemma \ref{l.limit} implies that $u$ is a $p$-supercaloric function in $B(0,1) \times (0,\infty)$. 
For any $\eps > 0$, by choosing $q > \frac{n}{\eps}(2-p)$, we obtain a $p$-supercaloric function $u \notin L^{\eps}_{\loc}(B(0,1) \times (0,\infty))$.
This shows that, in general, a $p$-supercaloric function is not locally integrable to any positive power. 
The same example with the constant in \eqref{e.constant} is a supersolution to \eqref{evo_eqn} also for $1<p\leq \frac{2n}{n+1}$, with the additional requirement $q > \frac{(n-p)(2-p)}{p-1}$. 
The function $u$ is $p$-supercaloric for any $c > 0$ with $0<q \leq \frac{(n-p)(2-p)}{p-1}$ and $1<p<2$, since then
$$
\partial_t u(x,t) \geq 0 \geq \div\left( |\nabla u(x,t) |^{p-2} \nabla u(x,t) \right)
$$
for $(x,t) \in \R^n\times (0,\infty)$.
\end{example}

The following intrinsic weak Harnack inequality for supersolutions can be found in~\cite[Proposition 3.1]{GLL}.

\begin{lem} \label{lem:weak_harnack}
Let $\frac{2n}{n+1}<p<2$ and let $\Omega$ be an open set in $\R^n$.
Assume that $u$ is a nonnegative lower semicontinuous weak supersolution  to \eqref{evo_eqn} in $\Omega_T$. 
There exist constants $c_1=c_1(n,p) \in (0,1)$ and $c_2=c_2(n,p) \in (0,1)$ such that, for almost every $s \in (0,T)$, we have
$$
\inf_{B(x_0,2r)} u(\cdot,t) \geq c_1 \bint_{B(x_0,2r)}\, u(x,s)\, \d x
$$
for every $t \in[s+\frac{3}{4}\theta r^p, s+\theta r^p]$, with
$$
\theta= c_2\left( \bint_{B(x_0,2r)} u(x,s)\, \d x \right)^{2-p},
$$
whenever $B(x_0,16r) \times [s,s+\theta r^p] \subset \Omega_T$.
\end{lem}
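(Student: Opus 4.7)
The plan is to adapt DiBenedetto's intrinsic scaling and expansion-of-positivity machinery to the singular range $\tfrac{2n}{n+1}<p<2$. Write $M := \bint_{B(x_0,2r)} u(x,s)\,\d x$ and work in the intrinsic cylinder $B(x_0,16r)\times[s,s+\theta r^p]$ with $\theta = c_2 M^{2-p}$; this choice of $\theta$ is exactly the one for which the rescaled function $v(y,\tau) = u(x_0+ry, s+M^{2-p}r^p \tau)/M$ solves a normalised version of \eqref{evo_eqn} on a cylinder of unit size. All estimates will first be proved for the bounded truncations $u_k = \min\{u,k\}$, which are genuine weak supersolutions by Theorem \ref{bdd_supersoln}, and then transferred to $u$ by letting $k\to\infty$.

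The first step is a measure-theoretic propagation of the initial mass. Applying the Caccioppoli estimate of Lemma \ref{unbdd_caccio} with a small $\varepsilon>0$ and a cutoff supported in $B(x_0,4r)\times[s,s+cM^{2-p}r^p]$, the hypothesis $\bint u(\cdot,s)\,\d x \geq M$ propagates forward to a lower bound
\[
|\{x\in B(x_0,4r):u(x,t^*)>\eta M\}| \geq \gamma |B(x_0,4r)|
\]
at some $t^* \in (s, s+cM^{2-p}r^p)$, with $\eta,\gamma$ depending only on $n,p$. The second step expands this positivity in space: a logarithmic estimate (obtained by taking $\varepsilon = p-1$ in Lemma \ref{unbdd_caccio}, as mentioned in the remark following Theorem \ref{thm:barenblatt}) spreads positivity across time slices in a controlled way, and iterating a bounded number of geometric doublings of the ball turns the single-time density lower bound on $B(x_0,4r)$ into a uniform density lower bound on $B(x_0,2r) \times [s+\tfrac{1}{2}\theta r^p, s+\theta r^p]$; this is where the safety factor in the hypothesis $B(x_0,16r)\times[s,s+\theta r^p]\subset\Omega_T$ is consumed. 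Finally, a De Giorgi iteration on the level sets $\{u<kM\}$ for a geometric sequence $k\searrow c_1$, using Caccioppoli for $(kM-u)_+$ together with the parabolic Sobolev embedding of Lemma \ref{sobolev}, upgrades density positivity to the pointwise bound $u\geq c_1 M$ on $B(x_0,2r)\times[s+\tfrac{3}{4}\theta r^p,s+\theta r^p]$.

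The main obstacle is the singular character of the equation: in the fast diffusion regime positivity is not preserved indefinitely in time, which is what forces the bound to hold only in a narrow intrinsic window whose length $\theta r^p = c_2 M^{2-p}r^p$ shrinks as $M\to 0$. Keeping the constants depending only on $n$ and $p$ (and not on $M$ or $r$) requires careful bookkeeping through the intrinsic rescaling, and the fact that the Caccioppoli inequality in the singular regime admits only negative powers of $u$ as testing factors (whence the $u^{-1-\varepsilon}$ weight in Lemma \ref{unbdd_caccio}) makes each step more delicate than in the degenerate case $p>2$. The assumption $p>\tfrac{2n}{n+1}$ enters because $\lambda = n(p-2)+p>0$ is exactly what guarantees that the Sobolev exponent in Lemma \ref{sobolev} meshes with the scaling of the Caccioppoli estimate; in the subcritical range this balance breaks down and such a weak Harnack inequality is no longer available in the present intrinsic form.
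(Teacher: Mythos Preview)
The paper does not prove this lemma at all: it is stated with the sentence ``The following intrinsic weak Harnack inequality for supersolutions can be found in~\cite[Proposition 3.1]{GLL}'' and no argument is given. So there is nothing to compare against except the cited reference.

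Your outline follows the standard DiBenedetto--Gianazza--Vespri route (intrinsic rescaling, measure-theoretic propagation of initial mass, expansion of positivity, De Giorgi iteration on sublevel sets), which is indeed the strategy behind the result in \cite{GLL} and \cite{digi}. In that sense the plan is correct in spirit. A couple of small points are worth tightening. First, the hypothesis here is that $u$ is already a weak supersolution, not merely $p$-supercaloric, so the truncations $u_k=\min\{u,k\}$ are supersolutions by Lemma~\ref{l.min}, not by Theorem~\ref{bdd_supersoln}. Second, your forward reference to ``the remark following Theorem~\ref{thm:barenblatt}'' is misplaced; the observation about $\varepsilon=p-1$ yielding a logarithmic estimate is in the remark following Theorem~\ref{thm:gradient_integbility}.

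More substantively, what you have written is a proof \emph{plan}, not a proof. Each of the three steps you name (propagation of $L^1$ mass to density positivity, expansion of positivity in space via logarithmic estimates, De Giorgi iteration to pointwise bounds) is itself a multi-page argument in the singular setting, and the bookkeeping that keeps $c_1,c_2$ depending only on $n,p$ through the intrinsic scaling is exactly the hard part. Since the paper is content to cite \cite{GLL} for this, your sketch is already more than the paper offers; but if you intend it as a self-contained proof you would need to supply the quantitative details of each step, which are not recoverable from Lemmas~\ref{unbdd_caccio} and~\ref{sobolev} alone.
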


We shall also apply the following Harnack inequality for weak solutions, see \cite[Appendix A, Proposition A.1.1]{digi}.

\begin{lem} \label{lem:L1_harnack}
Let $1<p<2$ and let $\Omega$ be an open set in $\R^n$.
Assume that $h$ is a nonnegative continuous weak solution to \eqref{evo_eqn} in $\Omega_T$. 
There exists a constant $c = c(p,n)$ such that 
$$
\sup_{s< \tau <t} \bint_{B(x_0,r)} h(x,\tau) \, \d x \leq c \inf_{s<\tau<t} \bint_{B(x_0,2r)} h(x,\tau) \, \d x 
+ c \left( \frac{t-s}{r^p} \right)^\frac{1}{2-p}
$$
whenever $B(x_0,2 r) \times [s,t] \subset \Omega_T$. 
\end{lem}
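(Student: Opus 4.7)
The idea is to control the temporal variation of the spatial $L^1$ average by testing the weak form of \eqref{evo_eqn} against a spatial cutoff. Pick $\zeta \in C_0^\infty(B(x_0,2r))$ with $0 \leq \zeta \leq 1$, $\zeta \equiv 1$ on $B(x_0,r)$, and $|\nabla\zeta| \leq C/r$, and set $y(\tau) := \int h(x,\tau)\zeta(x)^\lambda\,\d x$ for a suitable $\lambda \geq p/(p-1)$. Using $\zeta^\lambda$ as a test function (justified via Steklov averaging in time, since $h$ is continuous and hence lies in $L^p_{\loc}(0,T;W^{1,p}_{\loc}(\Omega))$) yields
$$|y'(\tau)| \leq C\int_{B(x_0,2r)} |\nabla h|^{p-1}\zeta^{\lambda-1}|\nabla\zeta|\,\d x.$$

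The crux is to dominate this flux by a scaling-invariant geometric quantity. I would split the integrand via Young's inequality at dual exponents $p/(p-1)$ and $p$, inserting an auxiliary weight $h^{-(1+\varepsilon)(p-1)/p}$ with small $\varepsilon > 0$. The first factor produces a weighted Caccioppoli-type term $\int |\nabla h|^p h^{-(1+\varepsilon)}\zeta^\lambda\,\d x$, which is controlled via Lemma~\ref{unbdd_caccio} applied to the truncations $\min\{h,k\}$ and then passed to the limit as in the proof of Theorem~\ref{integbility_1}; the second factor, after using $|\nabla\zeta|\leq C/r$, collapses to an expression that eventually yields a differential inequality of the form $|y'(\tau)| \leq C|B(x_0,r)|\,r^{-p/(2-p)}\,(y(\tau)/|B(x_0,r)|)^{p-1}$ (or an additive variant). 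The exponent $p/(2-p)$ emerges naturally from the intrinsic parabolic scaling $|x|^p \sim t\,h^{2-p}$ of the fast diffusion equation.

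Integrating this ODE-type bound across $[\tau_1,\tau_2] \subset [s,t]$ and dividing by $|B(x_0,r)|$ yields
$$\bint_{B(x_0,r)} h(x,\tau_1)\,\d x \leq C\bint_{B(x_0,2r)} h(x,\tau_2)\,\d x + C\left(\frac{t-s}{r^p}\right)^{1/(2-p)},$$
from which the claim follows by choosing $\tau_1$ to realise the supremum on the left and $\tau_2$ the infimum on the right.

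The main obstacle is the precise matching of exponents in the Young's inequality split so that the absorbed Caccioppoli term cancels cleanly and the remainder carries the exponent $1/(2-p)$ dictated by the Barenblatt asymptotics. A secondary technical issue is handling the weight $h^{-(1+\varepsilon)}$ near the zero set of $h$, for which one works with truncations and passes to the limit via monotone convergence. An alternative, equivalent route is to first derive a local $L^\infty$ smoothing bound by Moser iteration and then compare pointwise; either path forces the $1/(2-p)$ scaling through the Barenblatt extremals.
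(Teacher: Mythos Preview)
The paper does not supply a proof of this lemma; it is quoted from \cite[Appendix~A, Proposition~A.1.1]{digi}. Your outline---test the equation against a purely spatial cutoff, bound the resulting flux $\int|\nabla h|^{p-1}\zeta^{\lambda-1}|\nabla\zeta|\,\d x$ by a weighted Young split together with a Caccioppoli estimate, then integrate in time---is the standard route and is essentially how the cited reference argues.

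Two technical warnings about your sketch. First, you cannot invoke Lemma~\ref{unbdd_caccio} to bound the term $\int|\nabla h|^p h^{-(1+\varepsilon)}\zeta^\lambda\,\d x$ at a single time $\tau$: that lemma is a space--time estimate requiring test functions in $C_0^\infty(\Omega_T)$, and if one reruns its proof with a time-independent $\zeta$ one picks up boundary terms $\int h^{1-\varepsilon}\zeta^\lambda\big|_{\tau_1}^{\tau_2}$ that have to be absorbed. The honest argument therefore integrates the weak formulation over $[\tau_1,\tau_2]$ from the outset and compares $\int h(\cdot,\tau_1)\zeta^\lambda$ with $\int h(\cdot,\tau_2)\zeta^\lambda$ directly, rather than passing through a pointwise differential inequality for $y(\tau)$. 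Second, the flux estimate naturally produces a factor $r^{-p}$ from $|\nabla\zeta|^p$, not $r^{-p/(2-p)}$; after taking the supremum over $\tau_1$ one arrives at an inequality of the shape
\[
S\le I + c\,\frac{t-s}{r^p}\,S^{p-1},
\qquad S=\sup_{s<\tau<t}\bint_{B(x_0,r)}h(\cdot,\tau),\quad I=\inf_{s<\tau<t}\bint_{B(x_0,2r)}h(\cdot,\tau),
\]
and the exponent $\tfrac{1}{2-p}$ on $(t-s)/r^p$ appears only at the final absorption step, via Young's inequality with conjugate exponents $\tfrac{1}{p-1}$ and $\tfrac{1}{2-p}$. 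The scaling you identify is correct, but it enters at a different point than your ODE coefficient suggests.
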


Next lemma will be a useful building block in the characterization of the complementary class of the Barenblatt type supercaloric functions.

\begin{lem} \label{lem:monster_harnack}
Let $\frac{2n}{n+1}<p<2$ and let $\Omega$ be an open set in $\R^n$.
Assume that $u$ is a nonnegative supercaloric function in $\Omega_T$ and let $(x_0,t_0) \in \Omega_T$.
Let $0<t_j<T$, $j=1,2,\dots$, with $t_j \to t_0$ as $j \to \infty$. 
If for every $r_0 > 0$ there exists $0<r \leq r_0$ such that 
\begin{equation} \label{eq:blowup-assumption}
\lim_{j\to \infty} \int_{B(x_0,r)} u(x,t_j) \, \d x = \infty,
\end{equation}
then 
$$
\liminf_{\substack{(x,t)\to (x_0,s) \\ t>s} } u(x,t)|x-x_0|^{\frac{p}{2-p}}>0 
$$
for every $s > t_0$.
\end{lem}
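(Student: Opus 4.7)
The plan is to apply the intrinsic weak Harnack inequality in Lemma~\ref{lem:weak_harnack} to the bounded truncations $u_k=\min\{u,k\}$ at precisely the scale and base time that makes $(x_0,s)$ the target; the hypothesis guarantees that enough mass is available so that the required intrinsic waiting time $\theta r^p$ can be matched to $t_m-t_0$. We may assume $u\ge 0$ by adding a constant. As a preliminary observation, since $r\mapsto\int_{B(x_0,r)}u(x,t_j)\,\d x$ is monotone increasing in $r$, the hypothesis strengthens to the statement that $\lim_{j\to\infty}\int_{B(x_0,r)}u(x,t_j)\,\d x=\infty$ for every $r>0$ with $\overline{B(x_0,r)}\subset\Omega$. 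Moreover, by Lemma~\ref{l.caloricmin} and Theorem~\ref{bdd_supersoln}, each $u_k$ is a nonnegative weak supersolution, so Lemma~\ref{lem:weak_harnack} does apply to it.

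Given any sequence $(x_m,t_m)\to(x_0,s)$ with $t_m>s>t_0$, write $\rho_m=|x_m-x_0|>0$ and apply the weak Harnack to $u_{k_m}$ with center $x_0$, Harnack radius $\rho_m$ (so that $x_m\in B(x_0,2\rho_m)$), and base time $\tau_m$ chosen close to $t_0$. The intrinsic waiting time is $\theta\rho_m^p=\hat c\,M_m^{2-p}\rho_m^{\lambda}$, where $M_m=\int_{B(x_0,2\rho_m)}u_{k_m}(\cdot,\tau_m)\,\d y$, $\lambda=n(p-2)+p>0$ by the supercritical assumption, and $\hat c=\hat c(n,p)$ absorbs $|B|$-constants. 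In order to place $t_m$ in the Harnack time window $[\tau_m+\tfrac34\theta\rho_m^p,\tau_m+\theta\rho_m^p]$, one prescribes
\[
M_m=\bigl(\hat c^{-1}(t_m-\tau_m)\bigr)^{1/(2-p)}\rho_m^{-\lambda/(2-p)},
\]
which is a finite positive number. By the strengthened hypothesis one can pick $\tau_m=t_{j_m}$ for $j_m$ so large that $\int_{B(x_0,2\rho_m)}u(\cdot,\tau_m)>M_m$; then, by monotone convergence and continuity of $k\mapsto\int u_k$, the intermediate value theorem produces a truncation level $k_m$ for which $\int_{B(x_0,2\rho_m)}u_{k_m}(\cdot,\tau_m)$ equals $M_m$ exactly.

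The Harnack bound then reads
\[
u(x_m,t_m)\ge u_{k_m}(x_m,t_m)\ge c_1\bint_{B(x_0,2\rho_m)}u_{k_m}(\cdot,\tau_m)\,\d y=\frac{c_1\,M_m}{|B(x_0,2\rho_m)|}.
\]
Substituting $M_m$ and using the algebraic identity $-n-\lambda/(2-p)=-p/(2-p)$ gives
\[
u(x_m,t_m)\,\rho_m^{p/(2-p)}\ge C(n,p)\,(t_m-\tau_m)^{1/(2-p)}.
\]
Letting $j_m\to\infty$ so that $\tau_m\to t_0$, and taking the liminf in $m$, we obtain $\liminf_{m\to\infty}u(x_m,t_m)\,\rho_m^{p/(2-p)}\ge C(n,p)\,(s-t_0)^{1/(2-p)}>0$. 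Since the sequence was arbitrary, this is the claim.

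The main technical point is the scale-matching step: for the prescribed $M_m$, which grows like $\rho_m^{-\lambda/(2-p)}$ as $\rho_m\to 0$, the truncation level $k_m$ must be realizable at some base time near $t_0$, and it is exactly here that the mass blow-up hypothesis is indispensable. A secondary subtlety is that Lemma~\ref{lem:weak_harnack} is stated only for almost every base time, whereas the hypothesis prescribes the specific sequence $t_j$; this is resolved by noting that $\tau\mapsto\int_{B(x_0,r)}u(\cdot,\tau)\,\d x$ is lower semicontinuous (apply Fatou to the lsc function $u$), so the large-mass property persists on a neighborhood of each $t_j$, hence on a full-measure set of admissible base times approaching $t_0$. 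The parabolic containment $B(x_0,16\rho_m)\times[\tau_m,t_m]\subset\Omega_T$ needed in the Harnack holds automatically for $m$ large, since $\rho_m\to 0$ and $\tau_m,t_m$ stay in a compact subinterval of $(0,T)$.
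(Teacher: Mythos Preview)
Your argument is correct and more direct than the paper's. The paper proceeds in two stages: first it constructs a Poisson modification and applies the $L^1$-Harnack inequality for \emph{solutions} (Lemma~\ref{lem:L1_harnack}) to transfer the mass blow-up from the given sequence $t_j$ to \emph{every} time $\tau\in(t_0,t)$ and every small radius, obtaining the intermediate bound $r^p\bigl(\bint_{B(x_0,2r)}u(x,\tau)\,\d x\bigr)^{2-p}\ge t-t_0$; only then is the weak Harnack for supersolutions (Lemma~\ref{lem:weak_harnack}) applied to truncations of $u$ to extract the pointwise lower bound. You skip the first stage entirely by using the original $t_{j_m}$ themselves as base times in the weak Harnack and tuning the truncation level so that the intrinsic waiting time lands on $t_m$; the algebra $-n-\lambda/(2-p)=-p/(2-p)$ then gives the claim directly. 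This avoids the Poisson modification and the solution-level Harnack, making the proof shorter and more elementary. The paper's detour does buy a stronger intermediate statement---mass blow-up at every time after $t_0$ and every small scale, not just along the given sequence---which is of independent interest, but for the lemma as stated your shortcut suffices. Your handling of the ``almost every base time'' caveat via lower semicontinuity of $\tau\mapsto\int_B u(\cdot,\tau)\,\d x$ is adequate; to remove the slight circularity (the a.e.\ set in Lemma~\ref{lem:weak_harnack} depends on $u_{k_m}$, while $k_m$ is chosen after $\tau_m$) one can restrict to rational truncation levels, so the co-null set of admissible base times is fixed in advance, and relax the exact equality $\int_{B(x_0,2\rho_m)} u_{k_m}(\cdot,\tau_m)=M_m$ to membership in the range $[M_m,(4/3)^{1/(2-p)}M_m]$, which still places $t_m$ in the Harnack window $[\tau_m+\tfrac34\theta\rho_m^p,\tau_m+\theta\rho_m^p]$.
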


\begin{proof}
First we observe that~\eqref{eq:blowup-assumption} implies that 
\[
\lim_{j\to \infty} \int_{B(x_0,r)} u(x,t_j) \, \d x = \infty
\] 
for every $r > 0$. 
We construct a Poisson modification and apply Lemma~\ref{lem:L1_harnack} for the obtained solution and Lemma~\ref{lem:weak_harnack} for the truncations 
$u_k=\min\{u,k\}$, $k=1,2,\dots$, which are weak supersolutions by Theorem \ref{bdd_supersoln}.
\\
Let  $t \in (t_0,T)$ and $r > 0$.
The assumption in~\eqref{eq:blowup-assumption} implies that
$$
\bint_{B(x_0, r)} u(x,t_j) \, \d x \geq 4 c \left( \frac{t-t_j}{r^p} \right)^\frac{1}{2-p},
$$
for $j$ large enough. 
This implies that, when $j$ is large enough, we may choose a real number $k_j$ such that 
\begin{align} \label{eq:integral_level}
\bint_{B(x_0, r)} u_{k_j}(x,t_j) \, \d x = 2 c \left( \frac{t-t_j}{r^p} \right)^\frac{1}{2-p}.
\end{align}
We would like to obtain a weak solution $h_{k_j}$ to \eqref{evo_eqn} in $B(x_0,2r) \times (t_j,T)$ with lateral and initial boundary values $u_{k_j}$. 
We will do this via approximation in order to guarantee existence of solution to such a problem. 
Since $u_{k_j}$ is lower semicontinuous, there exists a sequence of continuous functions $\psi_{k_j,i}$, $i=1,2,\dots$, 
with $0 \leq \psi_{k_j,i}\leq \psi_{k_j,i+1} \leq u_{k_j}$, $i=1,2,\dots$, and 
$\psi_{k_j,i} \nearrow u_{k_j}$ pointwise in $\Omega_T$ as $i\to \infty$. 
By Theorem \ref{t.existence}, there exists a weak solution $h_{k_j,i}\in C(\overline{B(x_0,2r) \times (t_j,T)})$ to \eqref{evo_eqn} in $B(x_0,2r) \times (t_j,T)$ 
with $h_{k_j,i}=\psi_{k_j,i}$ on $\partial_p(B(x_0,2r) \times (t_j,T))$. 
By the parabolic comparison principle in Theorem \ref{t.parcoparison}, we have $h_{k_j,i} \leq u_{k_j}$ in $B(x_0,2r) \times (t_j,T)$
for every $i=1,2,\dots$.  Lemma~\ref{lem:L1_harnack}, with $s = t_j$ and $t < T$, implies
\begin{align*}
\sup_{t_j< \tau <t} \bint_{B(x_0,r)} h_{k_j,i}(x,\tau) \, \d x &\leq c \inf_{t_j<\tau<t} \bint_{B(x_0,2r)} h_{k_j,i}(x,\tau) \, \d x 
+ c \left( \frac{t-t_j}{r^p} \right)^\frac{1}{2-p} \\
&\leq c \bint_{B(x_0,2r)} u_{k_j}(x,\tau) \, \d x + c \left( \frac{t-t_j}{r^p} \right)^\frac{1}{2-p}
\end{align*}
for every $\tau \in (t_j, t)$.
On the other hand, we have
$$
\sup_{t_j< \tau <t} \bint_{B(x_0,r)} h_{k_j,i}(x,\tau) \, \d x \geq\bint_{B(x_0,r)} h_{k_j,i}(x,t_j) \, \d x = \bint_{B(x_0,r)} \psi_{k_j,i}(x,t_j) \, \d x.
$$
By combining the two inequalities above, we have
\[
\bint_{B(x_0,r)} \psi_{k_j,i}(x,t_j) \, \d x
\leq c \bint_{B(x_0,2r)} u_{k_j}(x,\tau) \, \d x +c \left( \frac{t-t_j}{r^p} \right)^\frac{1}{2-p}.
\]
Passing to the limit $i\to \infty$ and using~\eqref{eq:integral_level}, we obtain
\begin{align*}
2 c \left( \frac{t-t_j}{r^p} \right)^\frac{1}{2-p} 
&=\bint_{B(x_0,2r)} u_{k_j}(x,t_j) \, \d x
=\lim_{i\to\infty}\bint_{B(x_0,r)} \psi_{k_j,i}(x,t_j) \, \d x\\
&\le c \bint_{B(x_0,2r)} u(x,\tau) \, \d x 
+ c \left( \frac{t-t_j}{r^p} \right)^\frac{1}{2-p}
\end{align*}
and thus
\[
\left( \frac{t-t_j}{r^p} \right)^\frac{1}{2-p} 
\le \bint_{B(x_0,2r)} u(x,\tau) \, \d x
\]
for every $\tau \in (t_j,t)$. By passing $j\to \infty$, we obtain 
\begin{align} \label{eq:lowerbound}
r^p \left( \bint_{B(x_0,2r)} u(x,\tau)\, \d x \right)^{2-p} 
\geq t-t_0
\end{align}
for every $\tau \in (t_0,t)$. Notice that the construction above can be done for any $r \in (0,r_0]$, which implies that~\eqref{eq:lowerbound} holds for every $\tau \in (t_0,t)$ and $r \in (0,r_0]$.

Let $(t_j)_{j\in\N}$ be a sequence for which~\eqref{eq:blowup-assumption} holds and let $r_j \to 0$ as $j\to \infty$.
It follows that
\begin{equation} \label{eq:liminf_lowerbound}
\liminf_{j\to \infty } r_j^p \left( \bint_{B(x_0,2r_j)} u(x,t_j)\, \d x \right)^{2-p} 
\geq  t-t_0 >0.
\end{equation}
This implies that
\[
\lim_{j\to\infty}\bint_{B(x_0,2r_j)} u(x,t_j)\, \d x=\infty.
\]
Let $\eps\in[0, c_2(t-t_0))$ and, for every $j$ large enough, choose the truncation levels $k_j$ so that
$$
\theta_j r_j^p = c_2 r_j^p \left( \bint_{B(x_0,2r_j)} u_{k_j}(x,t_j) \, \d x  \right)^{2-p} = \eps.
$$
The constant $c_2 = c_2(n,p) \in (0,1)$ is from Lemma~\ref{lem:weak_harnack}. Since $u_{k_j}$ is a nonnegative weak supersolution to \eqref{evo_eqn} in $\Omega_T$, 
the weak Harnack inequality in Lemma~\ref{lem:weak_harnack} implies
\[
\inf_{B(x_0,2r_j)} u(\cdot,t) \geq \inf_{B(x_0,2r_j)} u_{k_j}(\cdot,t) 
\geq c(n,p) \bint_{B(x_0,2r_j)} u_{k_j}(x,t_j)\, \d x 
\geq c(n,p)\eps^{\frac1{2-p}}r_j^{-\frac{p}{2-p}}
\]
for every $t \in[ t_j + \frac{3}{4}\eps, t_j + \eps]$ and $j$ large enough. 
Since sequence $r_j \to 0$ is arbitrary, the claim follows.
\end{proof}

Next we give a characterization of the complementary class of the Barenblatt type supercaloric functions.

\begin{theo} \label{thm:monster}
	Let $\frac{2n}{n+1}<p<2$  and let $\Omega$ be an open set in $\R^n$. Assume that $u$ is a $p$-supercaloric function in $\Omega_T$. Then the following properties are equivalent:
	\begin{itemize}
		\item[(i)] $u\notin L^q_{\loc}(\Omega_T)$ for any $q> \frac{n}{p}(2-p)$,
		\item[(ii)] $u \notin L^{\frac{n}{p}(2-p)}_{\loc} (\Omega_T)$,
		\item[(iii)] there exists $\Omega'\Subset \Omega$ and $\delta\in(0, \frac{T}{2})$ such that 
		$$\esssup_{\delta<t<T-\delta} \int_{\Omega'} |u(x,t)|\, \d x=\infty,$$
		\item[(iii')] for every $\alpha\in(\frac{n}{p}(2-p), 1)$ there exists $\Omega'\Subset \Omega$ and $\delta\in(0, \frac{T}{2})$ such that 
		$$\esssup_{\delta<t<T-\delta} \int_{\Omega'} |u(x,t)|^\alpha\,\d x=\infty.$$	
		\item[(iv)] there exists $(x_0,t_0) \in \Omega_T$ such that
		$$ \liminf_{\substack{(x,t)\to (x_0,s) \\ t>s} } u(x,t)|x-x_0|^{\frac{p}{2-p}}>0 $$
		for every $s>t_0$.
	\end{itemize}
\end{theo}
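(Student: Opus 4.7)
I would prove the cyclic chain of implications
(i) $\Longrightarrow$ (iii') $\Longrightarrow$ (iii) $\Longrightarrow$ (iv) $\Longrightarrow$ (ii) $\Longrightarrow$ (i),
which establishes the equivalence of all five conditions. The links (i)$\Rightarrow$(iii'), (iii')$\Rightarrow$(iii), and (ii)$\Rightarrow$(i) are contrapositives of implications already present in the proof of Theorem~\ref{thm:barenblatt}: namely, (i)$\Rightarrow$(iii') is the contrapositive of the iteration argument of Theorem~\ref{integbility_1} (the negation of (iii')$\Rightarrow$(i) there), while (iii')$\Rightarrow$(iii) and (ii)$\Rightarrow$(i) are contrapositives of H\"older's inequality. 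So these three links come essentially for free.

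The new content is the pair (iii)$\Rightarrow$(iv) and (iv)$\Rightarrow$(ii). For (iii)$\Rightarrow$(iv), I would start from $\Omega' \Subset \Omega$ and $\delta \in (0, T/2)$ realizing $\esssup_{\delta < t < T-\delta} \int_{\Omega'} u(\cdot, t)\,\d x = \infty$, pick a sequence $t_j \in (\delta, T-\delta)$ along which $\int_{\Omega'} u(\cdot, t_j)\,\d x \to \infty$, and pass to a subsequence so that $t_j \to t_0 \in [\delta, T-\delta]$. A covering argument on $\overline{\Omega'}$ then produces $x_0 \in \overline{\Omega'}$ with $\sup_j \int_{B(x_0, r)} u(\cdot, t_j)\,\d x = \infty$ for every $r > 0$: otherwise every $x \in \overline{\Omega'}$ would admit a ball on which the integrals remained uniformly bounded in $j$, and finitely many such balls would cover $\overline{\Omega'}$ and bound $\int_{\Omega'} u(\cdot, t_j)\,\d x$ uniformly, contradicting the blow-up. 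A diagonal extraction over the radii $r_k = 1/k$ then yields a further subsequence (still denoted $t_j$) along which $\lim_j \int_{B(x_0, r)} u(\cdot, t_j)\,\d x = \infty$ for every $r > 0$. Since $x_0 \in \overline{\Omega'} \subset \Omega$ and $t_0 \in [\delta, T-\delta] \subset (0, T)$, the point $(x_0, t_0)$ lies in $\Omega_T$, and Lemma~\ref{lem:monster_harnack} delivers (iv).

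For (iv)$\Rightarrow$(ii), fix any $s > t_0$. The liminf condition furnishes constants $c, \eta > 0$ such that $u(x, t) \geq c |x - x_0|^{-p/(2-p)}$ on $B(x_0, \eta) \times (s, s + \eta)$. Raising to the power $\alpha = \tfrac{n}{p}(2-p)$ yields $u^\alpha \geq c^\alpha |x - x_0|^{-n}$, whose integral over any spatial neighbourhood of $x_0$ is infinite (note that $p/(2-p) > n$ and $\alpha \cdot p/(2-p) = n$, both using $p > \frac{2n}{n+1}$). Hence $u \notin L^{n(2-p)/p}_{\loc}(\Omega_T)$, which is (ii).

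The main obstacle is (iii)$\Rightarrow$(iv): the compactness and diagonal step must be executed carefully to pin down a single focal point $(x_0, t_0)$ and a single sequence $t_j$ satisfying the hypothesis of Lemma~\ref{lem:monster_harnack}, which is the only place the intrinsic weak Harnack estimate truly enters. Everything else in the cycle is either elementary integration or a direct citation of earlier results.
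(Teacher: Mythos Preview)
Your proposal is correct and follows essentially the same approach as the paper: the same cyclic skeleton built from contrapositives of the implications in Theorem~\ref{thm:barenblatt} (via H\"older and the iteration of Theorem~\ref{integbility_1}), with the substantive step (iii)$\Rightarrow$(iv) handled by a compactness/covering argument to locate $(x_0,t_0)$ and then an appeal to Lemma~\ref{lem:monster_harnack}, and (iv)$\Rightarrow$(ii) by direct integration of $|x-x_0|^{-n}$. If anything, your explicit subsequence extraction (choosing $j_k>j_{k-1}$ with $\int_{B(x_0,1/k)}u(\cdot,t_{j_k})\ge k$, then using monotonicity in $r$) is a bit cleaner than the paper's two-stage covering, which tacitly passes from $\limsup=\infty$ to $\lim=\infty$ without spelling out the extraction.
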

\begin{proof} Since $u$ is locally bounded from below, by adding a constant, we may assume that $u\ge1$. 

(ii) $\implies$ (i): H\"older's inequality. 
(i) $\iff$ (iii'): Direct consequence of Theorem~\ref{thm:barenblatt}. 

(iii') $\implies$ (iii): H\"older's inequality. 

(iv) $\implies$ (iii'): There exists $r>0$ and $\delta>0$ such that 
$$
u(x,t) |x-x_0|^\frac{p}{2-p} \geq \eps> 0
$$
for every $(x,t) \in \left( B_r(x_0)\setminus \{x_0\} \right) \times (t_0, t_0+\delta)$. This implies (iii').  

(iv) $\implies$ (ii): The same argument as previous implication. 

(iii) $\implies$ (iv): This is the implication which requires a bit more machinery, especially use of Harnack inequalities. 
From (iii) we have that there exists a point $t_0 \in (0,T)$ and a sequence $t_j \to t_0$ as $j \to \infty$ such that
$$
\lim_{j\to \infty} \int_{\Omega'}u(x,t_j)\, \d x = \infty.
$$

The collection $\{B(x,r_x): x\in \overline{\Omega'},r_x>0\}$ is an open cover of $\overline{\Omega'}$. 
Since $\overline{\Omega'}$ is compact, there exists a finite subcover 
$\{B(x_i,r_i):i=1,\dots,N\}$ and
$$
\int_{\Omega'} u(x,t_j)\, \d x \leq \sum_{i=1}^N \int_{B(x_i,r_i)} u(x,t_j)\, \d x
$$
for every $j=1,2,\dots$. By taking the limit $j \to \infty$ it follows that
\begin{align} \label{eq:ball_blowup}
\lim_{j \to \infty} \int_{B(x_i,r_i)} u(x,t_j) \, \d x = \infty
\end{align}
for some $i\in \{1,\dots,N\}$. We claim that there exists $x_0 \in B(x_i,r_i)$ such that 
\begin{align} \label{eq:smallball_blowup}
\lim_{j \to \infty} \int_{B(x_0,r)} u(x,t_j) \, \d x = \infty
\end{align}
for arbitrarily small $r>0$.
For a contradiction, assume that for every point $y\in  B(x_i,r_i)$ there exists $r_y>0$ such that
$$
\limsup_{j\to \infty} \int_{B(y,r)} u(x,t_j)\, \d x < \infty
$$
for every $0<r \leq r_y$. 
The collection $\{B(y,r_y):y\in \overline{B}(x_i,r_i)\}$ is an open cover of  $\overline{B}(x_i,r_i)$ and 
thus there exists a finite subcover $\{ B(y_k, r_k):k=1,2,\dots,M\}$. Since
$$
\int_{B(x_i,r_i)} u(x,t_j) \, \d x \leq \sum_{k=1}^M \int_{B(y_k,r_k)} u(x,t_j) \, \d x,
$$
we obtain a contradiction by taking the limit $j \to \infty$ as the left hand side goes to infinity and right hand side does not. Thus~\eqref{eq:smallball_blowup} holds. Now the assumption for Lemma~\ref{lem:monster_harnack} holds which completes the proof. 
\end{proof}

\begin{rem} \label{rem:endpoint_integbility}
We complete the proof of Theorem~\ref{thm:barenblatt} by showing the remaining implication (ii) $\implies$ (i) by contraposition. 
Assume that the assertion (i) does not hold in Theorem~\ref{thm:barenblatt}. 
Then  $u\notin L^q_{\loc}(\Omega_T)$ for any $q> \frac{n}{p}(2-p)$ and Theorem~\ref{thm:monster} implies that $u \notin L^{\frac{n}{p}(2-p)}_{\loc} (\Omega_T)$. 
Also the assertion $u \in L^\infty_{\loc} ( 0,T; L^1_{\loc}(\Omega) )$ in Theorem~\ref{thm:barenblatt} follows similarly from Theorem~\ref{thm:monster}.
\end{rem}


\begin{thebibliography}{10}

 \bibitem{veron} M.F. Bidaut-V\'{e}ron, \emph{Self-similar solutions of the $p$-Laplace heat equation: the fast diffusion case}, Pacific J. Math. 227 (2006), no. 2., 201--269.

 \bibitem{Anders} A. Bj\"{o}rn, J. Bj\"{o}rn, U. Gianazza and M. Parviainen, \emph{Boundary regularity for degenerate and singular parabolic equations}, Calc. Var. Partial Differential Equations 52 (2015), no. 3-4, 797--827.
 
 \bibitem{B} G.I. Barenblatt, \emph{On self-similar motions of a compressible fluid in a porous medium}, Akad. Nauk SSSR. Prikl. Mat. Meh. 16 (1952), 679--698 (in Russian).

\bibitem{Baroni2017} P. Baroni, \emph{Singular parabolic equations, measures satisfying density conditions, and gradient integrability},
Nonlinear Anal. 153 (2017), 89--116. 

\bibitem{BogeleinEtAl2014}
V. B\"{o}gelein, F. Duzaar and P. Marcellini, \emph{Existence of evolutionary variational solutions via the calculus of variations}, J. Differential Equations 256 (2014), no. 12, 3912--3942.
  
 \bibitem{bogelein} V. B\"{o}gelein, F. Duzaar and C. Scheven, \emph{The obstacle problem for parabolic minimizers},  J. Evol. Equ. 17 (2017), no. 4, 1273--1310. 
  
 \bibitem{Vazquez}E. Chasseigne and J. L. V\'azquez, \emph{Theory of extended solutions for fast-diffusion equations in optimal classes of data. Radiation from singularities},  Arch. Ration. Mech. Anal. 164 (2002), no. 2, 133--187.
 
\bibitem{Di} E. DiBenedetto, \emph{Degenerate parabolic equations}, Universitext, Springer-Verlag, New York, 1993.

\bibitem{digi} E. DiBenedetto, U. Gianazza and V. Vespri, \emph{Harnack's inequality for degenerate and singular parabolic equations}, Springer Monographs in Mathematics, Springer, New York, 2012.  

\bibitem{Fontes2009}
M. Fontes, \emph{Initial-boundary value problems for parabolic equations}, Ann. Acad. Sci. Fenn. Math. 34 (2009), no. 2, 583--605.

\bibitem{GLL} U. Gianazza, N. Liao and T. Lukkari, \emph{A boundary estimate for singular parabolic diffusion equations},  NoDEA Nonlinear Differential Equations Appl. 25 (2018), no. 4, Paper No. 33, 24 pp.

 \bibitem{ivert} P.-A. Ivert, \emph{On the boundary value problem for $p$-parabolic equations in methods of spectral analysis in mathematical physics}, Oper. Theory Adv. Appl. 186 (2009), 229--239.

\bibitem{juutinen} P. Juutinen, P. Lindqvist and J. J. Manfredi, \emph{On the equivalence of viscosity solutions and weak solutions for a quasi-linear equation}, SIAM J. Math. Anal. 33 (2001), no. 3, 699--717. 

 \bibitem{Kilpelainen}T. Kilpel\"{a}inen and P. Lindqvist, \emph{On the Dirichlet boundary value problem for a degenerate parabolic equation},  SIAM J. Math. Anal. 27 (1996), no. 3, 661--683.

\bibitem{KinnunenLindqvist2006} J. Kinnunen and P. Lindqvist, \emph{Pointwise behaviour of semicontinuous supersolutions to a quasilinear parabolic equation},  Ann. Mat. Pura Appl. (4) 185 (2006), no. 3, 411--435.

\bibitem{KinnunenLindqvist2005} J. Kinnunen and P. Lindqvist, \emph{Summability of semicontinuous supersolutions to a quasilinear parabolic equation},  Ann. Sc. Norm. Super. Pisa Cl. Sci. (5) 4 (2005), no. 1, 59--78. 

\bibitem{KoKuPa} R. Korte, T. Kuusi and M. Parviainen, \emph{A connection between a general class of superparabolic functions and supersolutions},  J. Evol. Equ. 10 (2010), no. 1, 1--20.

\bibitem{kokusi} R. Korte, T. Kuusi and J. Siljander, \emph{Obstacle problem for nonlinear parabolic equations}, J. Differential Equations 246 (2009), no. 9, 3668--3680.

\bibitem{kilelipa} J. Kinnunen, P. Lehtel\"a, P. Lindqvist, M. Parviainen, \emph{Supercaloric functions for the porous medium equation}, J. Evol. Equ. 19 (2019), no. 1, 249--270

\bibitem{Kuusi2008}
T. Kuusi, \emph{Harnack estimates for weak supersolutions to nonlinear degenerate parabolic equations},  Ann. Sc. Norm. Super. Pisa Cl. Sci. (5) 7 (2008), no. 4, 673--716.

\bibitem{Kuusi2009} 
T. Kuusi, \emph{Lower semicontinuity of weak supersolutions to nonlinear parabolic equations}, Differential Integral Equations 22 (2009), no. 11-12, 1211--1222. 

\bibitem{KuLiPa} T. Kuusi, P. Lindqvist, and M. Parviainen: \emph{Shadows of infinities},  Ann. Mat. Pura Appl. (4) 195 (2016), no. 4, 1185--1206.

 \bibitem{W} N. Watson, \emph{Introduction to heat potential theory}, Mathematical Surveys and Monographs 182, American Mathematical Society, Providence, RI, 2012.
 
 \bibitem{zhaobook} Z. Wu, J. Zhao, J. Yin, and H. Li, \emph{Nonlinear diffusion equations}, World Scientific Publishing Co., Inc., River Edge, NJ, 2001, Translated from the 1996 Chinese original and revised by the authors.
 
\end{thebibliography}
\end{document}